\newtheorem{definition}{Definition}
\newtheorem{theorem}{Theorem}
\newtheorem{assumption}{Assumption}
\newtheorem{remark}{Remark}
\newtheorem{proposition}{Proposition}
\newenvironment{@abssec}[1]{%
	\vspace{.06in}\footnotesize
	\parindent=0in
	\ignorespaces 
	}
\newenvironment{keywords}{\begin{@abssec}{\keywordsname}}{\end{@abssec}}
\newenvironment{AMS}{\begin{@abssec}{AMS subject classification:}}{\end{@abssec}}
\title{\bf Efficient Techniques for Shape Optimization with Variational Inequalities using Adjoints}
\author{Daniel Luft\thanks{Trier University, Department of Mathematics, 54286 Trier, Germany (\tt luft@uni-trier.de)} \and Volker Schulz\thanks{Trier University, Department of Mathematics, 54286 Trier, Germany (\tt volker.schulz@uni-trier.de)} \and Kathrin Welker\thanks{Helmut Schmidt University / University of the Federal Armed Forces, Faculty of Mechanical Engineering, 22043 Hamburg, Germany (\tt welker@hsu-hh.de)}} 
\newcommand{\R}{{\mathbb{R}}} 
\newcommand{\Gi}{\Gamma_\text{int}}
\begin{document}

	\maketitle
	
	\begin{abstract}
		\noindent
		In general, standard necessary optimality conditions cannot be formulated in a straightforward manner for semi-smooth shape optimization problems. 
		In this paper, we consider shape optimization problems constrained by variational inequalities of the first kind, so-called obstacle-type problems.
		Under appropriate assumptions, we prove existence of adjoints for regularized problems and convergence to adjoints of the unregularized problem.
		Moreover, we derive shape derivatives for the regularized problem and prove convergence to a limit object. Based on this analysis, an efficient optimization algorithm is devised and tested numerically.
	\end{abstract}
	
	\begin{keywords}
		\textbf{Key words: } Semi-smooth optimization, variational inequality, obstacle problem, shape optimization, numerical methods, adjoint methods.
	\end{keywords}
	
	\begin{AMS}
		\textbf{AMS subject classifications: }
		65K15, 49Q10, 49M29, 35Q93, 35J86, 49J40. 
	\end{AMS}

	\section{Introduction}
	\label{section_model}
	
	We consider shape optimization problems constrained by variational inequalities (VI) of the first kind, so-called obstacle-type problems. Applications are manifold and arise, whenever a shape is to be constructed in a way not to violate constraints for the state solutions of partial differential equation depending on a geometry to be optimized. Just think of a heat equation depending on a shape, where the temperature is not allowed to surpass a certain threshold. This example is basically the model problem that we are formulating in \cref{section_ModelProblem}. 
	Applications of general VI's include contact problems in solid state mechanics, viscoplasticity and network equilibrium problems, and thus a wide range of industrial problems (cf. \cite{kikuchi1988contact,capatina2014variational, cocou2002existence,giannessi1995variational}).
	
	Shape optimization problem constraints in the form of VIs are challenging, since classical constraint qualifications for deriving Lagrange multipliers generically fail. Therefore, not only the development of stable numerical solution schemes but also the development of suitable first order optimality conditions is an issue.
	
	By usage of tools of modern analysis, such as monotone operators in Banach spaces, significant results on properties of solution operators of variational inequalities have been achieved since the 1960s (cf.~\cite{Br-1971,BrSt-1968,LiSt-1967}).
	However, there are only very few approaches in literature to the problem class of VI constrained shape optimization problems so far. In \cite{KO-1994}, shape optimization of 2D elasto-plastic bodies is studied, where the shape is simplified to a graph such that one dimension can be written as a function of the other.  
	The non-trivial existence of solutions of VI constrained shape optimization problems is discussed in \cite{Delfour_Zolesio,SokoZol}. 
	E.g., in \cite[Chap.~4]{SokoZol}, shape derivatives of elliptic variational inequality problems are presented in the form of solutions to again variational inequalities.
	In \cite{Myslinski-2001}, shape optimization for 2D graph-like domains are investigated. Also \cite{LR-1991a, LR-1991b} present existence results for shape optimization problems which can be reformulated as optimal control problems, whereas \cite{DM-1998, G-2001} show existence of solutions in a more general set-up. In \cite{Myslinski-2004, Myslinski-2007}, level-set methods are proposed and applied to graph-like two-dimensional problems. Moreover, \cite{HL-2011} presents a regularization approach to the computation of shape and topological derivatives in the context of elliptic variational inequalities and, thus, circumventing the numerical problems in \cite[Chap.~4]{SokoZol}.
	Recently, in \cite{Heinemann-Sturm-2016}, a sensitivity analysis is performed for a class of semi-linear variational inequalities and a strong convergence property is shown for the material derivative. Furthermore, state-shape derivatives are established under regularity assumptions. 
	
	In this paper, we aim at optimality conditions for VI constrained shape optimization in the flavor of optimality conditions for VI constrained optimal control problems as in \cite{hintermuller2008active,hintermuller2009mathematical,Hint-Surowiec-2011}. In general, standard necessary optimality conditions cannot be formulated in a straightforward manner for semi-smooth shape optimization problems. 
	Under appropriate assumptions, we prove existence of adjoints and convergence of adjoints resulting from regularized variational inequalities.
	These analytical results are also verified numerically.
	Moreover, convergence of shape derivatives related to the smoothed problem is shown and the limit object is identified. Furthermore, we build on the resulting optimality conditions and devise an optimization algorithm giving specific numerical results. This algorithm does no longer depend on smoothing strategies as in \cite{mPDAS}. In \cite{mPDAS}, a shape optimization method based on a regularized variant of the variational inequality has been devised and observed that the performance of this algorithm strongly depends on the tightness of the obstacle. This problem does no longer arise with the strategy developed in the present paper. On the contrary, the algorithms gets even faster, the more degrees of freedom are constrained by the obstacle.

	This paper is structured as follows. In \cref{section_ModelProblem}, we formulate the VI constrained shape optimization model with general elliptic coefficients on which we focus in this paper. 
	The necessary optimality conditions, including the existence of adjoint variables under certain regularity assumptions to the model problem are formulated in  \cref{section_Analytical}.
	In \cref{section_Algorithmic}, we formulate an algorithm to solve the model problem based on these analytical results and compare numerically this approach with several regularized strategies.

	\section{Problem class}
	\label{section_ModelProblem}
	
	Let $\Omega\subset \mathbb{R}^n$ be a bounded domain equipped with a sufficiently smooth boundary $\partial\Omega$, where $n\in\mathbb{N}$ is the dimension. For typical applications $n=2$ or $n=3$. This domain is assumed to be partitioned in a subdomain $\Omega_\text{out}\subset\Omega$ and an interior domain $\Omega_\text{int}\subset \Omega$ with boundary $\Gamma_\text{int}:=\partial\Omega_\text{int}$ such that $\Omega_\text{out}\sqcup \Omega_\text{int} \sqcup \Gamma_\text{int}=\Omega$, where $\sqcup$ denotes the disjoint union. The closure of $\Omega$ is denoted by $\bar{\Omega}$. We consider $\Omega$ depending on $\Gi$, i.e., $\Omega=\Omega(\Gi)$. Figure \ref{example-domain} illustrates this situation.
	In the following, the boundary $\Gamma_\text{int}$ of the interior domain is called the \emph{interface} and an element of an appropriate shape space $\mathcal{X}$ (cf.~\cref{ShapeSpace}). 
	In contrast to the outer boundary $\partial\Omega$, which is assumed to be fixed, the inner boundary $\Gi$ is variable. 
	If $\Gi$ changes, then the subdomains $\Omega_\text{int},\Omega_\text{out}\subset \Omega$ change in a natural manner.

	Let $\nu>0$ be an arbitrary constant. For the objective function
	\begin{equation}\label{objective}
	J(y,\Omega):=\mathcal{J}(y,\Omega)+ \mathcal{J}_\text{reg}(\Omega):=\frac{1}{2}\int_{\Omega} \left|y - \bar{y}\right|^2\; dx+\nu \int_{\Gi} 1\; ds	
	\end{equation}
	we consider the following shape optimization problem: 
	\begin{align}
	\min\limits_{\Gi\in\mathcal{X}}\; J(y,\Omega)\label{eq_minimization}
	\end{align}
	constrained by the following obstacle type variational inequality:
	\begin{align}
	\label{VI_general}
	a(y,v-y)\geq \left<f,v-y\right> \quad\forall v\in K:=\{\theta \in H^1_0(\Omega)\colon \theta(x)\leq \varphi(x) \text{ in }\Omega\},
	\end{align}
	where $y \in K$ is the solution of the VI, $f\in L^2(\Omega)$ is explicitly dependent on the shape, $\left<\cdot,\cdot\right>$ denotes the duality pairing and $a(\cdot,\cdot)$ is a general strongly elliptic, i.e. coercive, symmetric bilinear form 
	\begin{align}\label{bilinearform}
	\begin{split}
	a\colon H_0^1(\Omega)\times H_0^1(\Omega) &\rightarrow \mathbb{R} \\
	(y, v) &\mapsto \int_{\Omega}\underset{i,j}{\sum}   a_{i,j}\partial_iy  \partial_jv + \underset{i}{\sum}d_i( \partial_iy v + y \partial_iv) +  byv \; dx
	\end{split}
	\end{align} 
	defined by coefficient functions $a_{i,j}, d_j, b\in L^\infty(\Omega)$, fulfilling the weak maximum principle. However, the results of this paper still remain correct if symmetry of $a(.,.)$ is dropped as an assumption by simple modifications of proofs.
	
	
	With the tracking-type objective $\mathcal{J}$ the model is fitted to data measurements $\bar{y}\in H^1(\Omega)$.
	The second term $\mathcal{J}_\text{reg}$ in the objective function $J$ is a perimeter regularization. 
	A perimeter regularization is frequently used to overcome ill-posedness of inverse problems, e.g., \cite{Burger-2004} investigates the regularization  and  numerical  solution of  geometric  inverse problems  related  to  linear  elasticity.
	In \cref{VI_general}, $\varphi$ denotes an obstacle which needs to be an element of $L^1_{\text{loc}}(\Omega)$ such that 
	the set of admissible functions $K$ is non-empty (cf.~\cite{SokoZol}).
	If additionally $\partial\Omega$ is Lipschitzian and $\varphi \in H^1(\Omega)$ with $\varphi_{\vert\partial\Omega} \geq 0$, then there is a unique solution to \cref{VI_general} satisfying $y\in H_0^1(\Omega)$, given that the assumptions from above hold  (cf.~\cite{ito2000optimal,kinderlehrer1980introduction,troianiello2013elliptic}).
	Further, \cref{VI_general} can be equivalently expressed as
	\begin{align}\label{PDE}
	a(y,v)+(\lambda,v)_{L^2(\Omega)} &= (f,v)_{L^2(\Omega)} \quad \forall v\in H_0^1( \Omega)
	\end{align}
	\begin{align}\label{VI_conditions}
	\begin{split}
	\lambda &\geq 0 \quad \text{in } \Omega \\
	y  &\leq \varphi \quad \text{in } \Omega \\
	\lambda(y-\varphi) &= 0 \quad \text{in } \Omega 
	\end{split}
	\end{align}
	with $(\cdot,\cdot)_{L^2(\Omega)}$ denoting the $L^2$-scalar product and $\lambda\in L^2(\Omega)$.
	
	It is well-known, e.g., from \cite{kinderlehrer1980introduction}, that under these assumptions there exists a unique solution $y$ to the obstacle type variational inequality (\ref{VI_general}) and an associated Lagrange multiplier $\lambda$. 
	The existence of solutions of any shape optimization problem is a non-trivial question. Shape optimization problems constrained by VIs are especially challenging because, in general, it is not guaranteed that an adjoint state can be introduced (cf.~\cite[Example in Chap.~1, Chap.~4]{SokoZol}). 
	An essential theoretical tool for the study of the existence of solutions is the derivation of optimality conditions, i.e., in particular, the formulation of an adjoint equation. 
	Therefore, \cref{section_Analytical} investigates the model problem analytically, also in view of formulating a numerically applicable algorithm in \cref{section_Algorithmic}.
	
	\begin{figure}
		\vspace*{.6cm}
		\begin{center}
			\includegraphics[width=.35\textwidth]{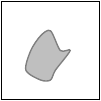}
			\put(-85,40){$\Omega_\text{int}$}
			\put(-30,13){$\Omega_\text{out}$}
			\put(-55,74){$\color{darkgray}\Gi $}
		\end{center}
		\caption[Example of a domain $\Omega$ and $X$]{Example of a domain $\Omega=\Omega_\text{out} \sqcup\Gi \sqcup\Omega_\text{int}$.}
		\label{example-domain}
	\end{figure} 
	
	\begin{remark}
		\label{ShapeSpace}
		The interface $\Gi$ is an element of an appropriate shape space. Please note that there exists no common shape space suitable for all applications. It should be mentioned, that the existence of shape derivatives and their form is not dependent on the explicit choice of a shape space, hence only requirements noted in the according theorems are necessary.
		From a computational point of view one has to deal with polygonal shape representations  arising in the setting of constrained shape optimization. This is owed to the fact that finite element methods usually discretize the models. 
		In this paper, we use Steklov-Poincar\'e metric as introduced in \cite{schulz2015Steklov}. These metrics can be considered e.g. on the space $B_e$ (cf. \cite{bauer2014overview}), or more generally on the space of $H^{1/2}$-shapes (cf. \cite{Welker_Diffeological}). In \cite{SiebenbornWelker_skin}, it is outlined that this is an essential step towards applying efficient FE solvers. Of course, it is possible to choose other shape space models, but this is beyond the scope of this paper.
	\end{remark}

	\section{Convergence results for adjoints and shape derivatives}
	\label{section_Analytical} 
	We assume the situation mentioned in \cref{section_ModelProblem}, which is also found in \cite{ItoKunisch_VI}, giving us $\lambda\in L^2(\Omega)$. It can be easily verified that this in turn gives the possibility to summarize the conditions (\ref{VI_conditions}) equivalently into a single condition of the form 
	\begin{equation}
	\label{lambda}
	\lambda=\max\big(0,\lambda+c(y-\varphi) \big)\quad \text{for any }c>0.
	\end{equation}
	The direct handling of general obstacle-type variational inequalities formulated as in (\ref{PDE})-(\ref{VI_conditions}), with \cref{VI_conditions} being equivalently substitutes by \cref{lambda}, poses several challenges.
	One challenge of the solution of \cref{PDE} is the occurrence of distributional numerical iterates for $\lambda$ in $H^{-1}(\Omega)$ when an augmented Lagrangian approach is applied to \cref{PDE} constrained by \cref{lambda}, despite the analytical solution $\lambda$ having $L^2(\Omega)$-regularity. For a more detailed discussion of this, see \cite[p.2]{ItoKunisch_VI}.
		In order to circumvent the occurrence of distributions in the solution scheme, the authors of \cite{ItoKunisch_VI} introduce a relaxation for relation \cref{lambda} with a given regularization parameter $\alpha\in(0,1)$
	\begin{equation}
	\lambda=\alpha\cdot\max\big(0,\lambda+\varrho(y-\varphi) \big)\quad \text{for any }\varrho>0
	\end{equation} 
	which in turn is equivalent to
	\begin{equation}
	\lambda=\max\big(0,\bar{\lambda}+c(y-\varphi) \big) \quad c \in (0,\infty)
	\end{equation}
	if $\bar{\lambda} = 0$ and $c = \frac{\varrho\alpha}{1-\alpha}$, where $\bar{\lambda}\in L^2(\Omega)$ can be motivated by updates of the augmented Lagrangian.
	This results in the equation
	\begin{equation}\label{RegularizedState}
	a(y_c, v) + (\max\big(0, \bar{\lambda} + c(y_c - \varphi)\big), v)_{L^2(\Omega)} = (f,v)_{L^2(\Omega)} \qquad \forall v \in H^1_0(\Omega),
	\end{equation}
	which in the following is called \emph{regularized state equation} or \emph{relaxed obstacle problem}. Explicit dependence on $\lambda$ is avoided, making the resulting semi-linear elliptic equation tractable, for example by semi-smooth Newton methods, see, e.g, \cite{ItoKunisch_VI}. 
	Moreover, the authors of \cite{ItoKunisch_VI} prove $L^2$-convergence of the regularized multiplier $\max\big(0, \bar{\lambda} + c\cdot(y_c - \varphi)\big)$ to the original $\lambda$ for their proposed semi-smooth Newton method.
	
	With problem (\ref{RegularizedState}) we are still left to solve a nonlinear, semi-smooth problem, giving rise to problems concerning existence of adjoints for the shape optimization problem. Hence, standard smoothing strategies can be applied to render this problem smooth enough to show existence of adjoints and to apply techniques such as Newton iterations. 
	
	In light of \cite{schiela2013convergence} and \cite{christof2017optimal}, we pose the following assumptions on the smoothed $\max$-function, which from now on is called $\max_{\gamma}\colon \mathbb{R} \rightarrow [0, \infty)$, with $\gamma > 0$ being the smoothing parameter:
	
	\begin{assumption}[on smoothed $\max$-function]\
		\begin{enumerate}
			\label{AssumptionsOnMax}
			\item[(i)] $\max_{\gamma}\in C^1(\Omega)$ for all $\gamma > 0$;
			\item[(ii)] there exists a function $g\colon (0,\infty) \rightarrow [0,\infty)$ with $g(\gamma) \rightarrow 0$ as $\gamma \rightarrow \infty$, s.t. $\vert\max_{\gamma}(x) - \max(0,x)\vert \leq g(\gamma)$ for all $x\in \mathbb{R}$ and for all $\gamma >0$;
			\item[(iii)] $\max_\gamma'(x) \in [0,1]$  and monotonically nondecreasing for all $x\in \mathbb{R}$ and all $\gamma >0$;
			\item[(iv)] $\max_\gamma'$ converges uniformly to $0$ on $(-\infty, -\delta)$ and $1$ on $(\delta, \infty)$ for all $\delta >0$ for $\gamma \rightarrow \infty$.
		\end{enumerate}
	\end{assumption}
	
	In the following, let $\text{sign}_\gamma$ denote the derivative of $\max_\gamma$.
	An example satisfying these assumptions is given in (\ref{SmoothingMaxExample}).
	Applying $\max_\gamma$ instead of $\max$ in (\ref{RegularizedState}) gives the following equation, which we call \emph{fully regularized state equation} in the subsequent chapters:
	\begin{equation}\label{SmoothedState}
	a(y_{\gamma, c},v)+\left(\text{max}_\gamma\big(\bar{\lambda} +  c(y_{\gamma, c}-\varphi)\big),v\right)_{L^2(\Omega)}=(f,v)_{L^2(\Omega)}\quad\forall v\in H_0^1(\Omega).
	\end{equation}
	
	So linearizing the corresponding Lagrangian with respect to $y_{\gamma,c}$ results in the typical adjoint equation
	\begin{equation}\label{SmoothedAdjoint}
	\begin{split}
	&	a(p_{\gamma, c}, v) + c\cdot\left(\textup{sign}_\gamma(\bar{\lambda} +  c(y_{\gamma, c}-\varphi))\cdot p_{\gamma,c},v\right)_{L^2(\Omega)}\\
	& =-(y_{\gamma,c} - \bar{y},v)_{L^2(\Omega)}\quad\forall v\in H_0^1(\Omega)
	\end{split}
	\end{equation}
	(see, e.g., \cite{hintermuller2008active} or \cite{schiela2013convergence} in the context of optimal control).
	
	\begin{remark}
		As in \cite{schiela2013convergence}, smoothness of the state equation (\ref{SmoothedState}) in $y_{\gamma,c}$ guarantees existence of solutions to the linearized equation (\ref{SmoothedAdjoint}) for a given $L^2(\Omega)$ right-hand side and, thus, existence of adjoints in the case of the considered tracking-type objective functional (\ref{objective}).
	\end{remark}
	
	\subsection{State and adjoint equation}
	\label{subsection_StateAdjoint}
	
	We first show that solutions of \cref{SmoothedState} converge strongly in $H^1$ to solutions of (\ref{PDE})-(\ref{VI_conditions}) for $\gamma,c \rightarrow \infty$. This is proven in \cite{schiela2013convergence} for stronger assumptions on the smoothed function $\max_\gamma$ and under $\gamma = c$ . 
	Since we rely on the general case $\gamma \neq c$ for the proofs in ongoing discussions, we state an according result.
	The first part of the following theorem is in analogy to \cite[Lemma~4.2]{christof2017optimal}. However, the difference is that we consider general elliptic bilinear forms and---more importantly---a modified argument in the maximum function resulting in different regularized state equations. These generalizations are necessary for our further analytical investigations leading to an adjoint equation.
	\begin{proposition}[$H^1$-convergence of the state]\label{Theorem0}
		Let $y_{\gamma, c}$, $y_c$ and $y$ be solutions to \cref{SmoothedState}, \cref{RegularizedState} and \cref{PDE}, respectively. Here, $a(\cdot,\cdot)$ is chosen by an elliptic bilinear form as in (\ref{bilinearform}) on a bounded, open domain $\Omega\subset \R^n$ with Lipschitz boundary, $f\in L^2(\Omega)$ and $\gamma, c >0$. Moreover, assume $\varphi \in H^1(\Omega)$, $\bar{\lambda}\in L^2(\Omega)$ and let $\max_\gamma\colon \mathbb{R} \rightarrow \mathbb{R}$ satisfy \cref{AssumptionsOnMax}. 
		
		Then \cref{SmoothedState} and \cref{RegularizedState} posses unique solutions and 
		\begin{align}\label{convergenceStateSmoothed}
		y_{\gamma,c}& \to y_c \text{ in }H^1(\Omega) \quad \text{ as }\gamma\to\infty;\\
		\label{convergenceState}
		y_{c}& \to y \,\text{ in }H^1(\Omega)\quad \text{ as }c\to\infty.
		\end{align} 
	\end{proposition}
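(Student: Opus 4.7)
The plan is to argue in three stages: well-posedness of the two regularized equations, convergence $y_{\gamma,c}\to y_c$ as $\gamma\to\infty$ for fixed $c$, and convergence $y_c\to y$ as $c\to\infty$. For well-posedness, both nonlinearities $u\mapsto\max(0,\bar\lambda+c(u-\varphi))$ and $u\mapsto\max_\gamma(\bar\lambda+c(u-\varphi))$ are monotonically nondecreasing (by \cref{AssumptionsOnMax}(iii) in the smoothed case) and globally Lipschitz from $L^2(\Omega)$ into itself with constant $c$. Adding such a monotone Lipschitz zero-order term to the coercive, continuous bilinear form $a$ produces a strongly monotone, hemicontinuous, coercive operator from $H^1_0(\Omega)$ into $H^{-1}(\Omega)$, so Browder--Minty (or, equivalently, minimization of the strictly convex coercive energy obtained by integrating the nonlinearity) yields the unique solutions $y_c$ and $y_{\gamma,c}$.

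For the convergence $y_{\gamma,c}\to y_c$ I would subtract \cref{SmoothedState} from \cref{RegularizedState}, test with $v=y_{\gamma,c}-y_c\in H^1_0(\Omega)$, and split the nonlinear increment as
\begin{align*}
\max_\gamma\bigl(\bar\lambda+c(y_{\gamma,c}-\varphi)\bigr)-\max\bigl(0,\bar\lambda+c(y_c-\varphi)\bigr) = R_\gamma + M,
\end{align*}
where $R_\gamma := \max_\gamma(\bar\lambda+c(y_{\gamma,c}-\varphi))-\max(0,\bar\lambda+c(y_{\gamma,c}-\varphi))$ is pointwise bounded by $g(\gamma)$ thanks to \cref{AssumptionsOnMax}(ii), and $M$ is a monotone increment of $\max(0,\cdot)$ whose $L^2$-pairing with $y_{\gamma,c}-y_c$ is nonnegative. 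Denoting by $\alpha$ the coercivity constant of $a$, this immediately forces $\alpha\|y_{\gamma,c}-y_c\|_{H^1}^2\leq g(\gamma)\,|\Omega|^{1/2}\,\|y_{\gamma,c}-y_c\|_{L^2}$, hence \eqref{convergenceStateSmoothed} with explicit rate $\mathcal{O}(g(\gamma))$.

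The convergence $y_c\to y$ follows the classical penalization scheme, but is subtler due to the augmented-Lagrangian shift $\bar\lambda$. Setting $\lambda_c:=\max(0,\bar\lambda+c(y_c-\varphi))$ and exploiting the pointwise identity $\lambda_c\cdot(\bar\lambda+c(y_c-\varphi))=\lambda_c^2$, a Young inequality produces
\begin{align*}
(\lambda_c,y_c-\varphi)_{L^2(\Omega)} = \tfrac{1}{c}\|\lambda_c\|_{L^2}^2 - \tfrac{1}{c}(\lambda_c,\bar\lambda)_{L^2} \geq \tfrac{1}{2c}\|\lambda_c\|_{L^2}^2 - \tfrac{1}{2c}\|\bar\lambda\|_{L^2}^2.
\end{align*}
Testing \cref{RegularizedState} with $y_c-w$ for an arbitrary fixed $w\in K$ then yields a uniform $H^1$-bound on $y_c$ together with $\|\lambda_c\|_{L^2}=\mathcal{O}(\sqrt c)$, which in turn forces $\|(y_c-\varphi)^+\|_{L^2}=\mathcal{O}(c^{-1/2})\to 0$. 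A weak $H^1_0$-subsequential limit $\tilde y$ therefore satisfies $\tilde y\leq\varphi$, and a Minty-type passage to the limit (testing with $v\in K$, invoking weak lower semicontinuity of the quadratic form $a(\cdot,\cdot)$, and absorbing the residual $\mathcal{O}(c^{-1})$ shift term) identifies $\tilde y$ as the unique VI solution $y$, so the whole net converges weakly. Strong convergence follows by testing the difference of \cref{RegularizedState} and \cref{PDE} with $y_c-y$: the same estimate combined with $\lambda(y-\varphi)=0$, $\lambda_c\geq 0$, $\varphi-y\geq 0$ gives $-(\lambda_c,y_c-y)_{L^2}\leq \tfrac{1}{2c}\|\bar\lambda\|_{L^2}^2\to 0$, while $(\lambda,y_c-y)_{L^2}\to 0$ by weak convergence, so $\limsup_c a(y_c-y,y_c-y)\leq 0$ and coercivity closes the argument. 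The main technical obstacle is precisely the $(\lambda_c,\bar\lambda)$ cross term produced by $\bar\lambda\neq 0$: the standard unshifted penalization proof must be adapted via the Young-absorption trick above, which is also what drives the $\mathcal{O}(c^{-1/2})$ feasibility bound on $(y_c-\varphi)^+$.
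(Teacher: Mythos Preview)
Your proposal is correct and, for the well-posedness and the $\gamma\to\infty$ convergence, matches the paper's proof essentially line by line: the paper also invokes Browder--Minty via monotonicity of the Nemetskii operators, then subtracts \cref{SmoothedState} from \cref{RegularizedState}, tests with $y_{\gamma,c}-y_c$, uses the monotone increment of $\max(0,\cdot)$ to drop the $M$-term, and bounds the residual by $g(\gamma)\,\mathrm{vol}(\Omega)^{1/2}\|y_{\gamma,c}-y_c\|_{H^1}$ exactly as you do.

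The only difference is in the $c\to\infty$ part: the paper does not prove \eqref{convergenceState} at all but simply cites \cite[Theorem~3.1]{ItoKunisch_VI}. Your self-contained penalization argument---the $\lambda_c(\bar\lambda+c(y_c-\varphi))=\lambda_c^2$ identity, the Young absorption of the $(\lambda_c,\bar\lambda)$ cross term to get $\|\lambda_c\|_{L^2}=\mathcal{O}(\sqrt c)$ and $\|(y_c-\varphi)^+\|_{L^2}=\mathcal{O}(c^{-1/2})$, then Minty identification and the strong-convergence upgrade via $-(\lambda_c,y_c-y)\le \tfrac{1}{2c}\|\bar\lambda\|^2$---is the standard route and is in the same spirit as the cited reference. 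In effect you supply what the paper outsources; the extra care you take with the $\bar\lambda$ shift is precisely what is needed to make the classical unshifted penalization proof go through, and it is handled correctly.
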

	
	\begin{proof}
		We prove statement (\ref{convergenceStateSmoothed}) of the theorem. For a proof of statement (\ref{convergenceState}), we refer to  \cite[Theorem 3.1]{ItoKunisch_VI}.
		
		We start by ensuring the existence of solutions to \cref{SmoothedState} and \cref{RegularizedState}. For this, we show that the Nemetskii-operator defined by 
		\begin{equation} \label{NymetskiyGamma}
		\Phi_\gamma\colon H^1(\Omega) \rightarrow L^2(\Omega), 
		y \mapsto \text{max}_\gamma\big(\bar{\lambda} + c\cdot(y-\varphi)\big)
		\end{equation}
		is a monotone operator for all $\gamma, c>0$. Due to \cref{AssumptionsOnMax}, it is clear that $\text{max}_\gamma\colon \mathbb{R} \rightarrow \mathbb{R}$ is a point-wise monotone function, implying that $\text{max}_\gamma\colon H^1(\Omega) \rightarrow H^1(\Omega), y \mapsto \text{max}_\gamma(y)$ is a monotone operator. Since $$\Psi_c\colon H^1(\Omega) \rightarrow H^1(\Omega), \; y \mapsto \bar{\lambda} + c\cdot(y - \varphi)$$ is an affine linear operator, and, thus monotone, the composition $\text{max}_\gamma \circ \Psi_c = \Phi_\gamma$ is also monotone.
		The same argument holds for the non-smoothed operator 
		\begin{equation*}
		\Phi\colon H^1(\Omega) \rightarrow L^2(\Omega), 
		y \mapsto \text{max}\big(0,\bar{\lambda} +  c\cdot(y-\varphi)\big).
		\end{equation*}
		Therefore, applying the Browder-Minty theorem for monotone operators yields the existence of unique solutions to \cref{SmoothedState} and \cref{RegularizedState} in $H^1(\Omega)$ for all $f\in L^2(\Omega)$ if $\Omega$ is bounded and we operate in Hilbert spaces. \\
		Now, we prove the second convergence (\ref{convergenceStateSmoothed}). For fixed $c>0$, let $y_{\gamma, c}$ and $y_c$ be solutions to \cref{SmoothedState} and \cref{RegularizedState}, respectively. Assumption~\ref{AssumptionsOnMax} (ii) together with the monotonicity of $\Phi$, the coercivity of $a(\cdot,\cdot)$ with constant $K >0$ and $y_{\gamma, c} - y_c \in H^1(\Omega)$ acting as a test-function yields
		\begin{align*}
		0 &\leq K \cdot\vert\vert y_{\gamma, c} - y_c \vert\vert_{H^1(\Omega)}^2 \\
		&\leq a( y_{\gamma, c} - y_c,  y_{\gamma, c} - y_c) \\
		&\leq a( y_{\gamma, c} - y_c,  y_{\gamma, c} - y_c) \\
		&\hspace{.4cm}+ \Big(\text{max}\big(0, \bar{\lambda} + c(y_{\gamma, c} - \varphi)\big) - \text{max}\big(0, \bar{\lambda} + c(y_c - \varphi)\big),  y_{\gamma, c} - y_c\Big)_{L^2(\Omega)} \\
		&= \Big(\text{max}_\gamma\big(\bar{\lambda} + c(y_{\gamma, c} - \varphi)\big) - \text{max}\big(0, \bar{\lambda} + c(y_{\gamma, c} - \varphi)\big),  y_{\gamma, c} - y_c\Big)_{L^2(\Omega)} \\
		&\leq \int_{\Omega} \vert \text{max}_\gamma\big( \bar{\lambda} + c(y_{\gamma, c} - \varphi)\big) - \text{max}\big(0,\bar{\lambda} + c(y_{\gamma, c} - \varphi)\big)\vert \cdot \vert y_{\gamma, c} - y_c \vert \;dx \\
		&\leq g(\gamma)\cdot\text{vol}(\Omega)^{\frac{1}{2}}\cdot\vert\vert y_{\gamma, c} - y_c \vert\vert_{H^1(\Omega)},
		\end{align*}
		which gives the desired convergence (\ref{convergenceStateSmoothed}).
	\end{proof}
	
	The following definition is needed to state the first main result of this paper, the convergence of adjoints. 
	\begin{definition}\label{RegDecompoDefi}
		Let $\Omega \subset \mathbb{R}^n$ be a bounded, open domain with Lipschitz boundary. A set $A \subseteq \Omega$ is called regularly decomposable, if there exists an $N \in \mathbb{N}$ and path-connected, bounded and open $A_i \subset \Omega$ with Lipschitz boundaries $\partial A_i$ such that $A = \overset{N}{\underset{i=1}{\sqcup}} \bar{A_i} $ is a disjoint union.
	\end{definition}
	
	With this definition it is possible to formulate the first main theorem concerning the convergence of adjoints corresponding to the fully regularized problems and characterization of the limit object.
	
	%
	
	\begin{theorem}[Convergence of the adjoints]
		\label{Main_theorem_1}
		Let $\Omega \subset \mathbb{R}^n$ for $n\leq 4$ be a bounded, open domain with Lipschitz boundary. Moreover, let the following assumptions are satisfied:
		\begin{itemize}
			\item[(i)] $\varphi \in H^1(\Omega)$, $f\in L^2(\Omega)$, $\bar{y}\in H^1(\Omega)$ and coefficient functions $a_{i,j}, d_j, b\in L^\infty(\Omega)$ in (\ref{PDE})-(\ref{VI_conditions});
			\item[(ii)] the active set $A = \{ x\in \Omega \;\vert\; y - \varphi  \geq 0 \}$ corresponding to (\ref{PDE})-(\ref{VI_conditions}) is regularly decomposable;
			\item[(iii)]  $A_c :=  \{ x \in \Omega \;\vert\; \bar{\lambda} + c\cdot(y_c - \varphi)  \geq 0 \}$ is regularly decomposable and
			\begin{equation}\label{ActiveSetAssump}
			A_c \subseteq A \quad \forall c>0,
			\end{equation}
			where $y_c$ solves the regularized state equation (\ref{RegularizedState});
			\item[(iv)] the following convergence holds:
			\begin{equation}\label{AssumptionOnSign}
			\Vert\textup{sign}_\gamma(\bar{\lambda} + c\cdot(y_{\gamma, c} - \varphi)) - \textup{sign}(\bar{\lambda} + c\cdot(y_{c} - \varphi))\Vert_{L^1(\Omega)} \rightarrow 0 \quad \text{for } \gamma \rightarrow \infty. 
			\end{equation}
		\end{itemize}
		
		Then the adjoints $p_{\gamma, c} \rightarrow p_c$ in $H^1_0(\Omega)$ for $\gamma \rightarrow \infty$ for all $c>0$, where $p_c$ is the solution to
		\begin{align}\label{adjoint_limitproblem}
		\begin{split}
		a(p_c, v) + c \cdot\int_{\Omega}\mathbbm{1}_{A_c} \cdot  p_c \cdot v \; dx &= -\int_{\Omega}(y_c - \bar{y})\cdot v\; dx \quad \forall v\in H^1_0(\Omega).
		\end{split}	
		\end{align}
		Moreover, there exists $p\in H^{-1}(\Omega)$ to (\ref{PDE})-(\ref{VI_conditions}) and $p$ is representable as an $H^1_0$-function given by the extension of $\tilde{p} \in H^1_0(\Omega\setminus A)$ to $\bar{\Omega}$, i.e.,
		\begin{align}\label{adjoint_unregu}
		p = 
		\begin{cases}
		\tilde{p} \quad \text{ in } \Omega\setminus A \\
		0 \quad \text{ in } A
		\end{cases},
		\end{align}
		where $\tilde{p}\in H^1_0(\Omega\setminus A)$ is the solution of the elliptic problem 
		\begin{align}
		\begin{split} \label{adjoint_subproblem}
		\: \quad a_{\Omega\setminus A}(\tilde{p}, v) &= -\int_{\Omega\setminus A}(y-\bar{y})v\; dx \quad  \quad \forall v\in H^1_0(\Omega\setminus A)
		\end{split}
		\end{align}
		with 
		\begin{align}\label{BilinRestricted}
		\begin{split}
		a_{\Omega\setminus A}:  \quad &H^1_0(\Omega\setminus A) \times H^1_0(\Omega\setminus A) \rightarrow \R \\
		&(\tilde{p}, v) \mapsto \int_{\Omega\setminus A}\underset{i,j}{\sum}   a_{i,j}\partial_i\tilde{p}  \partial_jv + \underset{i}{\sum}d_i( \partial_i \tilde{p} v + \tilde{p} \partial_iv) +  b\tilde{p}v \; dx
		\end{split}
		\end{align}
		being the restriction of bilinear form $a(\cdot,\cdot)$ to $\Omega\setminus A$.
		
		Further, the solutions $p_{c}$ of \cref{adjoint_limitproblem}  converge strongly in $H^1_0(\Omega)$ to the \newline $H^1_0$-representation of $p$.
	\end{theorem}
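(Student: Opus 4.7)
The plan is to carry out the two successive passages---first $\gamma\to\infty$ at fixed $c$, then $c\to\infty$---by a common three-step pattern: derive a uniform $H^1_0(\Omega)$-bound on the adjoints by testing the equation against its own solution and exploiting the nonnegativity of the penalty term; extract a weak limit; and identify that limit by inserting carefully supported test functions and invoking uniqueness. Strong convergence is then obtained in each case by subtracting the two equations, testing against the difference, and discarding the nonnegative penalty contribution.

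For the first passage, I would test \cref{SmoothedAdjoint} with $v=p_{\gamma,c}$. Since $\textup{sign}_\gamma\ge 0$ by \cref{AssumptionsOnMax}(iii), coercivity of $a(\cdot,\cdot)$ yields a uniform bound
\[
K\,\|p_{\gamma,c}\|_{H^1_0}^2 \le \|y_{\gamma,c}-\bar y\|_{L^2}\,\|p_{\gamma,c}\|_{L^2},
\]
together with the $H^1$-bound on $y_{\gamma,c}$ supplied by \cref{Theorem0}. Extract a weak limit $p_c$. To pass to the limit in the penalty term at a fixed test function $v\in H^1_0(\Omega)$, I would split it as
\[
c\int\big[\textup{sign}_\gamma(\bar\lambda+c(y_{\gamma,c}-\varphi))-\mathbbm{1}_{A_c}\big]\,p_{\gamma,c}\,v\,dx \;+\; c\int \mathbbm{1}_{A_c}\,p_{\gamma,c}\,v\,dx.
\]
The first summand vanishes via the convergence hypothesis (iv) together with the Sobolev embedding $H^1(\Omega)\hookrightarrow L^4(\Omega)$ available for $n\le 4$: the difference of sign functions is uniformly bounded and converges in $L^1$, hence also in $L^2$ by interpolation, and is paired against the $L^2$-bounded product $p_{\gamma,c}v$. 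The second summand converges by Rellich compactness of $p_{\gamma,c}$ in $L^2$. Uniqueness for \cref{adjoint_limitproblem} identifies $p_c$ and upgrades subsequential to full convergence. Strong convergence follows by testing the difference of the two equations with $p_{\gamma,c}-p_c$: an algebraic rearrangement of the penalty term produces a nonnegative piece $c\int\textup{sign}_\gamma(\cdots)(p_{\gamma,c}-p_c)^2\,dx$ plus a vanishing remainder, and coercivity of $a(\cdot,\cdot)$ closes the estimate.

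For the second passage, the same testing gives both $\|p_c\|_{H^1_0}\le C$ and the sharper bound $c\,\|p_c\|_{L^2(A_c)}^2\le C$. Let $p^\star$ be a weak $H^1_0$-limit of a subsequence. I identify $p^\star$ on $\Omega\setminus A$ by testing \cref{adjoint_limitproblem} against arbitrary $v\in H^1_0(\Omega\setminus A)$ extended by zero: the inclusion $A_c\subseteq A$ from hypothesis (iii) gives $\mathbbm{1}_{A_c}v\equiv 0$, the penalty term drops out, and the equation collapses in the limit to \cref{adjoint_subproblem} (using $y_c\to y$ in $L^2$ by \cref{Theorem0}). Uniqueness of $\tilde p$ yields $p^\star|_{\Omega\setminus A}=\tilde p$. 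For the vanishing of $p^\star$ on $A$, I would combine the penalty estimate $\|p_c\|_{L^2(A_c)}\to 0$ with Rellich's $L^2$-convergence $p_c\to p^\star$ to obtain $\|p^\star\|_{L^2(A_c)}\to 0$, and then invoke the regular decomposability assumption together with the multiplier convergence $\max(0,\bar\lambda+c(y_c-\varphi))\to\lambda$ from \cite{ItoKunisch_VI} to conclude that $A_c$ asymptotically exhausts $A$ up to a null set. Strong convergence of $p_c$ to the $H^1_0$-representation of $p$ is then a repetition of the coercivity argument, testing with $p_c$ minus its limit and dropping the nonnegative penalty contribution.

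The main obstacle I anticipate is precisely the identification $p^\star\equiv 0$ on $A$. The a priori bound only controls $p_c$ on $A_c$, a subset of $A$ that may in principle fail to fill out all of $A$ in the limit. Closing this gap is exactly where the regular decomposability hypotheses (ii)--(iii) must do work: they ensure that each component of $A$ has nontrivial interior with Lipschitz boundary, so that the $L^2$-multiplier convergence forces $A_c$ to cover $A$ up to a set of measure zero. Absent such a structural condition, the limit could only be characterised on the strongly complementary portion of $A$, and the $H^{-1}(\Omega)$-representation of $p$ would have to be reinterpreted accordingly.
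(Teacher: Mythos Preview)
Your approach is correct but differs substantially from the paper's in its mechanism for passing to the limit.

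For $\gamma\to\infty$, the paper bypasses weak compactness entirely: it shows that the linear operators $S_{\gamma,c}\colon H^1_0\to H^{-1}$ defining the left-hand side of \cref{SmoothedAdjoint} converge to $S_c$ in the operator norm---using the same $L^1\Rightarrow L^2$ interpolation for the sign-difference and the embedding $H^1\hookrightarrow L^4$ that you invoke---and then appeals to analyticity of the inversion $S\mapsto S^{-1}$ on bounded invertible operators to obtain $S_{\gamma,c}^{-1}\to S_c^{-1}$, hence $p_{\gamma,c}\to p_c$ strongly in one stroke. Your extract--identify--upgrade pattern is the more standard PDE argument and avoids the operator-theoretic machinery, at the cost of an extra subsequence/uniqueness step.

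For $c\to\infty$, the paper again argues via operator-norm convergence, but on the restricted domain $A$: it divides \cref{adjoint_limitproblem} by $c$, observes that the resulting operators on $H^1(A)$ converge in operator norm to $p\mapsto(p,\cdot)_{L^2(A)}$ (here using $\mathrm{vol}(A\setminus A_c)\to 0$), and since the rescaled right-hand side $-\tfrac1c(y_c-\bar y)$ tends to zero, concludes $p_c|_A\to 0$ directly in $H^1(A)$. On $\Omega\setminus A$ it runs a coercivity estimate close to yours, testing with $p_c-\tilde p$. Your global weak-compactness route, with test functions supported in $\Omega\setminus A$ to kill the penalty via $A_c\subseteq A$, is cleaner and more elementary; the paper's domain-splitting approach buys immediate $H^1(A)$-control on the active set rather than $L^2$-control that must then be lifted. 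Both arguments ultimately hinge on the exhaustion $\mathrm{vol}(A\setminus A_c)\to 0$: the paper asserts it by contraposition from $y_c\to y$ in $H^1_0(\Omega)$, whereas your proposed route via the $L^2$-multiplier convergence from \cite{ItoKunisch_VI} is an equally reasonable alternative.
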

	
	\begin{proof}
		Let us consider the regularized problem (\ref{SmoothedState}) for $\gamma, c > 0$. Existence and uniqueness of solutions $y_{\gamma, c}, p_{\gamma, c}$ of regularized state and adjoint are guaranteed by application of the Minty-Browder Theorem in analogy to \cref{Main_theorem_1} for $y_{\gamma, c}$ and the Lax-Milgram Theorem for $p_{\gamma, c}$, respectively. 
		
		This proof consists of two main parts:
		\begin{itemize}
			\item[1.] Showing the $H^1$-convergence of the smoothed to the non-smoothed regularized adjoint $p_{\gamma, c} \rightarrow p_c$ for $\gamma \rightarrow \infty$. \item[2.] Analyzing the limit PDE (\ref{adjoint_limitproblem}) for $c\rightarrow \infty$ and proving that $p_c \rightarrow p$ in $H^1(\Omega)$ for $c \rightarrow \infty$, where $p$ is defined as in (\ref{adjoint_unregu}).
		\end{itemize}
		
		\textbf{To 1.} We start to show the $H^1$-convergence of the smoothed to the non-smoothed regularized adjoint $p_{\gamma, c} \rightarrow p_c$ for $\gamma \rightarrow \infty$.
		
		The assumption (\ref{AssumptionOnSign}) of $L^1$-convergence of $\textup{sign}_\gamma(\bar{\lambda} + c\cdot(y_{\gamma, c} - \varphi))$ is equivalent to $L^p$-convergence for all $p\in [1,\infty)$ in our setting, since
		\begin{align*}
		\hspace{-.4cm}\;&\Vert\textup{sign}_\gamma(\bar{\lambda} + c\cdot(y_{\gamma, c} - \varphi)) - \textup{sign}(\bar{\lambda} + c\cdot(y_{c} - \varphi))\Vert_{L^p(\Omega)} \\ 
		\leq &\Vert\textup{sign}_\gamma(\bar{\lambda} + c\cdot(y_{\gamma, c} - \varphi)) - \textup{sign}(\bar{\lambda} + c\cdot(y_{c} - \varphi))\Vert_{L^1(\Omega)}^{1/p} \rightarrow 0 \quad \text{for } \gamma \rightarrow \infty 
		\end{align*}
		by monotony of the integral and \cref{AssumptionsOnMax} (ii)-(iv).
		Denote by $S_{\gamma,c}\colon H_0^1(\Omega) \rightarrow H^{-1}(\Omega)$ the linear operator corresponding to the left-hand side of the smoothed adjoint equation (\ref{SmoothedAdjoint}) and $S_{c}\colon H^1(\Omega) \rightarrow H^{-1}(\Omega)$ the one to (\ref{adjoint_limitproblem}). 
		We establish convergence of $S_{\gamma,c}$ to $S_c$ in the operator norm. 
		In the following, we apply H\"older's inequality and moreover, we use $L^p$-convergence of $\textup{sign}_\gamma(\bar{\lambda} + c\cdot(y_{\gamma, c} - \varphi))$ for all $p\in [1,\infty)$ as well as boundedness of $\textup{sign}_\gamma$ and $\textup{sign}$. 
		
		Further, since we are in the situation $\Omega\subset \R^n$ for $n\leq 4$, we have the following embedding with embedding constant $C>0$ (cf. \cite[Thm. 4.12 Part I, Case C]{adams2003sobolev})
		\begin{align}\label{EmbeddingH1inL4}
		H^1_0(\Omega) &\hookrightarrow L^4(\Omega) 	\quad \text{for } n \leq 4 
		\end{align}
		Combining all this yields
		\begin{align*}
		&\hspace{-.4cm}\Vert S_{\gamma,c} - S_{c} \Vert_{\text{op}} \\
		=
		&\mathop{\sup_{g\in H_0^1(\Omega)}}_{\Vert g \Vert = 1} \mathop{\sup_{h\in H_0^1(\Omega)}}_{\Vert h \Vert = 1} c\cdot\vert ((\textup{sign}_\gamma(\bar{\lambda} + c\cdot(y_{\gamma, c} - \varphi)) - \textup{sign}(\bar{\lambda} + c\cdot(y_{c} - \varphi)))\cdot g,h)_{L^2(\Omega)} \vert \\
		\leq 
		&\mathop{\sup_{g\in H_0^1(\Omega)}}_{\Vert g \Vert = 1} \mathop{\sup_{h\in H_0^1(\Omega)}}_{\Vert h \Vert = 1} c\cdot\Vert \big(\textup{sign}_\gamma(\bar{\lambda} + c\cdot(y_{\gamma, c} - \varphi))\\[-.6cm]
		&\hspace{3.8cm} - \textup{sign}(\bar{\lambda} + c\cdot(y_{c} - \varphi))\big)\Vert_{L^2(\Omega)}\cdot\Vert g \Vert_{L^4(\Omega)}\cdot \Vert h\Vert_{L^4(\Omega)} \\
		\leq
		&\; C^2\cdot c\cdot\Vert \textup{sign}_\gamma(\bar{\lambda} + c\cdot(y_{\gamma, c} - \varphi)) - \textup{sign}(\bar{\lambda} + c\cdot(y_{c} - \varphi))\Vert_{L^2(\Omega)} \rightarrow 0 \quad \text{for } \gamma \rightarrow \infty,
		\end{align*}
		which gives the desired convergence in the operator norm. Using analyticity of the inversion $\mathcal{I}\colon S \mapsto S^{-1}$ in the domain of invertible, bounded, linear operators given in our setting, convergence of the solution operators $S^{-1}_{\gamma,c} \rightarrow S^{-1}_c$ in operator norm is implied immediately, see e.g., \cite[page 237]{zeidler2012applied}.
		Combining this with the convergence of $y_{\gamma, c} \rightarrow y_c$ in $H^1_0(\Omega)$ established by \cref{Main_theorem_1} yields
		\begin{align*}
		&\hspace{-.3cm}\Vert p_{\gamma,c} - p_c \Vert_{H^1_0(\Omega)} \\
		=
		&\Vert -S^{-1}_{\gamma,c}(y_{\gamma,c} - \bar{y}) + S^{-1}_{c}(y_{c} - \bar{y}) \Vert_{H^1_0(\Omega)}  \\
		\leq 
		&\Vert S^{-1}_{\gamma,c}(y_{\gamma,c} - \bar{y}) - S^{-1}_{\gamma,c}(y_c - \bar{y}) \Vert_{H^1_0(\Omega)} + \Vert S^{-1}_{\gamma,c}(y_c - \bar{y})- S^{-1}_{c}(y_{c} - \bar{y}) \Vert_{H^1_0(\Omega)} \\
		\leq
		& \Vert S^{-1}_{\gamma,c} \Vert_{\text{op}} \Vert y_{\gamma,c} - y_c \Vert_{H^1_0(\Omega)} +
		\Vert S^{-1}_{\gamma,c} - S^{-1}_{c} \Vert_{\text{op}} \Vert y_c - \bar{y}\Vert_{H^1_0(\Omega)} \rightarrow 0 \quad \text{for } \gamma \rightarrow \infty,
		\end{align*}
		since $\Vert S^{-1}_{\gamma,c} \Vert_{\text{op}}$ can be bounded due to convergence.
		
		\textbf{To 2.} Next, we analyze the limit PDE (\ref{adjoint_limitproblem}) for $c\rightarrow \infty$. We show that $p_c \rightarrow p$ in $H^1(\Omega)$ for $c \rightarrow \infty$, where $p$ is defined as in (\ref{adjoint_unregu}).
		For this, we first notice that our assumption concerning regular decomposability of $A = \{ x\in \Omega \;\vert\; y -\varphi \geq 0 \}$ ensures that $\partial A =\{ x\in \Omega \;\vert\; y - \varphi = 0 \}$ forms a $C^{0,1}$-manifold embedded in  $\Omega$. This in turn leads to well definedness of the restricted bilinear form $a_{\Omega\setminus A}(.,.)$ and the well-posedness of the variational problem (\ref{adjoint_subproblem}) and, thus, of $p\in H^1_0(\Omega)$.
		Our next step is to show
		\begin{align}\label{WeakConvActiveSet}
		p_c \rightarrow p \quad \text{in } H^1(\Omega) \text{ for } c \rightarrow \infty. 
		\end{align}
		To show this, we artificially constrain problem (\ref{adjoint_limitproblem}) to $A\subseteq \Omega$. So denote by $a_A(\cdot,\cdot)$ the restriction of the bilinear form $a(\cdot,\cdot)$ to $A\subseteq \Omega$, defined in analogy to \cref{BilinRestricted}. The corresponding restricted problem becomes
		\begin{equation}\label{adjoint_subproblem_A}
		a_A(p_{c_{\vert A}}, v) + c\cdot \int_{A} \mathbbm{1}_{A_c} \cdot p_{c_{\vert A}} \cdot v \;dx = - \int_{A} (y_c - \bar{y})\cdot v \;dx \quad \forall v \in H^1_0(A),
		\end{equation}
		where the Dirichlet condition $p_{c_{\vert A}} = p_c$ on $\partial A$ is incorporated in the usual way. 
		Dividing by $c > 0$ gives an equivalent equation in the sense that a solution $p_{c_{\vert A}} \in H^1(A)$ to \cref{adjoint_subproblem_A} also solves the equivalent equation
		\begin{equation}\label{adjoint_subproblem_A_equiv}
		\frac{1}{c}\cdot a_A(p_{c_{\vert A}}, v) + \int_{A} \mathbbm{1}_{A_c} \cdot p_{c_{\vert A}} \cdot v \;dx = - \frac{1}{c} \cdot \int_{A} (y_c - \bar{y})\cdot v \;dx \quad \forall v \in H^1_0(A).
		\end{equation}
		The differential operator corresponding to the left-hand side of the equivalent equation (\ref{adjoint_subproblem_A_equiv}) is given by :
		\begin{equation}
		S_{A,c}\colon H^1(A) \rightarrow H^{-1}(A),\; p \mapsto \frac{1}{c}\cdot a_A(p, \cdot) + ( \mathbbm{1}_{A_c} \cdot p, \cdot )_{L^2(A)}.
		\end{equation}
		Next, we show that the differential operators $S_{A,c}$ converge in the linear operator norm $\Vert \cdot \Vert_{\text{op}}$ with the limit operator
		\begin{equation}
		S_A\colon H^1(A) \rightarrow H^{-1}(A),\; p \mapsto (p, \cdot )_{L^2(A)}.
		\end{equation}
		
		Whence 
		\begin{align*}
		&	\hspace*{-.3cm}\Vert S_{A,c} - S_A \Vert_{\text{op}}\\
		=& \mathop{\sup_{g\in H_0^1(A)}}_{\Vert g \Vert = 1} \mathop{\sup_{h\in H_0^1(A)}}_{\Vert h \Vert = 1} \left\vert \frac{1}{c}\cdot a_A(g, h) - \int_{A \setminus A_c} g\cdot h \;dx \right\vert \\
		\leq & \mathop{\sup_{g\in H_0^1(A)}}_{\Vert g \Vert = 1} \mathop{\sup_{h\in H_0^1(A)}}_{\Vert h \Vert = 1}
		\Bigg(\frac{1}{c} \Big(\underset{i,j}{\sum}\Vert a_{i,j}\Vert_{L^\infty(\Omega)} + \underset{j}{\sum} \Vert d_j \Vert_{L^\infty(\Omega)} + \Vert b \Vert_{L^\infty(\Omega)}\Big)\\[-.2cm]
		&\hspace{3.3cm} \cdot \Vert g \Vert_{H^1_0(A)}\cdot \Vert h \Vert_{H^1_0(A)} \\
		&\hspace{3.3cm}+ \text{vol}(A\setminus A_c)^{\frac{1}{2}} \cdot \Vert g  \Vert_{L^4(A)}\cdot \Vert h \Vert_{L^4(A)} \Bigg) \\
		=&\, \frac{1}{c} \Big(\underset{i,j}{\sum}\Vert a_{i,j}\Vert_{L^\infty(\Omega)} + \underset{j}{\sum} \Vert d_j \Vert_{L^\infty(\Omega)} + \Vert b \Vert_{L^\infty(\Omega)}\Big) + C^2\cdot \text{vol}(A\setminus A_c)^{\frac{1}{2}} \\
		&\hspace{-.3cm} \rightarrow 0 \quad \text{ for } c \rightarrow \infty,
		\end{align*}
		due to \cref{EmbeddingH1inL4} and since $\text{vol}(A\setminus A_c) \rightarrow 0 \text{ for } c \rightarrow \infty$, which would otherwise contradict $y_c \rightarrow y$ in $H^1_0(\Omega)$.
		We can now apply a similar argument as in Step 1, namely the analyticity of the inversion operator $\mathcal{I}\colon S \mapsto S^{-1}$, giving us convergence of the solution operators $S_{A,c}^{-1} \rightarrow S_A^{-1}$ in $\Vert \cdot \Vert_{\text{op}}$. Also notice that we can obtain the sequence of solutions $p_{c_{\vert A}}$ by solving \cref{adjoint_subproblem_A_equiv} with the corresponding right hand sides $-\frac{1}{c}(y_c - \bar{y})$ instead of the original equation (\ref{adjoint_subproblem_A}) and that the right hand sides converge to $0$ in $H^1(\Omega)$ as $c \rightarrow \infty$, as $y_c$ is convergent by \cref{Theorem0}. We conclude
		\begin{align*}
		0 &\leq \Vert p_c \Vert_{H^1(A)} = \Vert S_{A,c}^{-1}\big(-\frac{1}{c}(y_c - \bar{y})\big) \Vert_{H^1(A)} \\
		&\leq \frac{1}{c}\Vert (S_{A,c}^{-1} - S_A^{-1})(y_c - \bar{y})\Vert_{H^1(A)} + \frac{1}{c}\Vert S_A^{-1}(y_c - \bar{y}) \Vert_{H^1(A)} \\
		&\leq \frac{1}{c} \big(\Vert S_{A,c}^{-1} - S_A^{-1} \Vert_{\text{op}} + \Vert S_A^{-1} \Vert_{\text{op}}\big)  \cdot \big(\Vert y_c - y \Vert_{H^1(A)} + \Vert \bar{y} \Vert_{H^1(A)} \big)\\
		&\rightarrow 0 \quad \text{for } c \rightarrow \infty.
		\end{align*}

		For the proof of convergence it remains to address the convergence of $p_c$ on $\Omega\setminus A$.
		We can artificially restrict \cref{adjoint_limitproblem} to
		$\Omega\setminus A$ by imposing the Dirichlet boundary $p_{c\vert A}$ on $\partial A$, since $\partial A$ forms a $C^{0,1}$-submanifold of $\Omega$ as we assumed regular decomposability (cf.~\cref{RegDecompoDefi}) of the active set $A$. To distinguish the corresponding bilinear forms, we denote the restricted bilinear form by $a_{\Omega\setminus A}$.
		Since the unrestricted bilinear form $a(\cdot,\cdot)$ is strongly elliptic, coercivity for some constant $K>0$ also holds for $a_{\Omega\setminus A}$.
		This together with H\"{o}lders inequality, assumption $A_c \subseteq A$ for all $c>0$ and the fact that $p_c - \tilde{p}\in H^1_0(\Omega \setminus A)$ can act as a testfunction gives
		\begin{align*}
		0 &\leq K\Vert p_c - \tilde{p}\Vert^2_{H^1(\Omega\setminus A)} \leq a_{\Omega\setminus A}(p_c - \tilde{p}, p_c - \tilde{p}) = a_{\Omega\setminus A}(p_c, p_c - \tilde{p}) - a_{\Omega\setminus A}(\tilde{p}, p_c  - \tilde{p}) \\
		&= -c\int_{\Omega\setminus A}\mathbbm{1}_{A_c}p_c (p_c - \tilde{p}) \; dx - \int_{\Omega\setminus A}(y_c - \bar{y})(p_c - \tilde{p}) \;dx 
		+ \int_{\Omega\setminus A}(y - \bar{y})(p_c - \tilde{p}) \;dx \\
		&= \int_{\Omega\setminus A}(y - y_c)(p_c - \tilde{p}) \;dx 
		\leq \Vert y_c - y \Vert_{H^1(\Omega)} \Vert p_c - \tilde{p} \Vert_{H^1(\Omega\setminus A)}, 
		\end{align*}
		where $\tilde{p} \in H^1(\Omega\setminus A)$ is defined as in 
		(\ref{adjoint_subproblem}). This results in 
		\begin{align}
		p_c \rightarrow p \quad \text{in } H^1_0(\Omega\setminus A) \quad \text{for } c\rightarrow \infty
		\end{align}
		due to our assumptions and $y_c \rightarrow y$ in $H^1(\Omega)$ as by \cref{Theorem0}. Together with \cref{WeakConvActiveSet} this gives the desired convergence $p_c \rightarrow p$ in $H^1_0(\Omega)$.
	\end{proof}
	
	There are a few non-trivial assumptions in \cref{Main_theorem_1}: assumption (iii) and (iv). In the following, we formulate two remarks in which we address these assumptions (cf.~\cref{FeasabilityKunish} for (iii) and \cref{condition_v} for (iv)).
	
	\begin{remark}\label{FeasabilityKunish}
		It is possible to fulfill assumption (\ref{ActiveSetAssump}) on inclusion of the active sets $A_c\subset A$ by choosing a sufficient $\bar{\lambda}\in L^2(\Omega)$. To be more precisely, if we assume $\varphi\in H^2(\Omega)$, we can choose $\bar{\lambda}:= \max\{0, f - S\varphi\}$ with $S$ being the differential operator corresponding to the elliptic bilinear form $a(\cdot,\cdot)$ in (\ref{PDE}), guaranteeing feasibility $y_{c_1} \leq y_{c_2} \leq y \leq \varphi$ for all $0< c_1 \leq c_2$. For the proof of this, we refer  to \cite[Section 3.2]{ItoKunisch_VI}.
	\end{remark}
	
	\begin{remark}
		\label{condition_v}
		Assumption~\ref{AssumptionOnSign} ensures that convergence of $\textup{sign}_\gamma$ is compatible with convergence of $y_{\gamma,c}$ for $\gamma \rightarrow \infty$. For giving a working example, we verify this assumption in the numerical section under \cref{NumConvSign} for several demonstrative cases. 
	\end{remark}
	
	\begin{remark}
		The limit object $p\in H^1_0(\Omega)$ of the adjoints $p_{\gamma,c}$ as defined in (\ref{adjoint_unregu}) is  the solution of an elliptic problem (\ref{adjoint_subproblem}) on a domain $\Omega \setminus A$ with topological dimension greater than $0$. This can be exploited in numerical computations, for instance by a fat boundary method for finite elements on domains with holes as proposed by the authors of  \cite{maury2001fat}. 
	\end{remark}
	
	\begin{remark}
		We remind the reader, that $p$ is not necessarily an adjoint to the original problem \cref{objective} constrained by \cref{PDE}, \cref{VI_conditions}, but merely solution of a part of the limit of the optimality conditions for the regularized problem. For a discussion of a similar phenomenon in context of optimal control, we refer the interested reader to \cite[Section 4.2]{christof2017optimal}.
	\end{remark}
	
	
	\subsection{Shape derivatives}
	In this section, we apply our convergence results for the regularized state and adjoint equations to derive similar convergence results for the shape derivatives of the shape optimization problem constrained by the fully regularized state equation (\ref{SmoothedState}). 
	In general, shape derivatives of the unregularized VI constrained shape optimization problems do not exist (cf., e.g., \cite[Chapter 1.1]{SokoZol}). Nevertheless, we show existence of  shape derivatives for the shape optimization problem constrained by the fully regularized VI \cref{SmoothedState}. Then, a limiting object corresponding to the unregularized equation \cref{PDE}-\cref{VI_conditions} is derived.
	
	In the following, we split the main results into two theorems, the first one being the shape derivative for the fully regularized equation, the second one being convergence of the former for $\gamma, c \rightarrow \infty$.
	
	The shape derivative of a general shape functional $H$ at $\Omega$ in direction of a sufficiently smooth vector field $V$ is denoted by $DH(\Omega)[V]$. For the definition of shape derivatives or a detailed introduction into shape calculus, we refer to the monographs \cite{Delfour-Zolesio-2001,SokoZol}.
	In general, we have to deal with so-called material and shape derivatives of generic functions $h\colon \Omega\to \R$ in order to derive shape derivatives of objective shape functions. For their definitions and more details we refer to the literature, e.g., \cite{SokoNov}. In the following, we denote the material derivative of $h$ by $\dot{h}$ or $D_m(h)$ and the shape derivative of $h$ in the direction of a vector field $V$ is denoted by $h'$.
	
	\begin{remark}
		\label{Remark_shapederiv}
		In this section, we only consider the shape functional $J$ defined in (\ref{objective}) without regularization term $\mathcal{J}_{\text{\emph{reg}}}$, i.e., we focus only on $\mathcal{J}$. The shape derivative of $J$ is given by the sum of the shape derivative of $\mathcal{J}$ and $\mathcal{J}_{\text{\emph{reg}}}$, where $D\mathcal{J}_{\text{\emph{reg}}}(\Omega)[V]= \nu\int_{\Gamma_{\text{\emph{int}}}}\kappa\left<V,n\right> ds$ with $\kappa:=\text{\emph{div}}_{\Gamma_{\text{\emph{int}}}}(n)$ denoting the mean curvature of $\Gamma_{\text{\emph{int}}}$. Please note that the objective functional and the shape derivative in correlation with the regularized VI (\ref{SmoothedState}) depends on the parameters $\gamma$ and $c$. In order to denote this dependency, we use the notation $\mathcal{J}_{\gamma, c}$ and $D\mathcal{J}_{\gamma, c}(\Omega)[V]$ for the objective functional and its shape derivative, respectively.
	\end{remark}
	
	
	We state the first theorem, which presents the shape derivative of the objective functional $\mathcal{J}$ defined in (\ref{objective}) constrained by the fully regularized VI (\ref{SmoothedState}).
	
	\begin{theorem}
		\label{MainTheorem2}
		Assume the setting of the shape optimization problem formulated in \cref{section_ModelProblem}. Let the assumptions of \cref{Main_theorem_1} hold. Moreover, let $M := (a_{i,j})_{i,j = 1,2,\dots,n}$ be the matrix of coefficient functions to the leading order terms in (\ref{bilinearform}). Assume $y_{\gamma,c},p_{\gamma,c} \in W^{1,4}(\Omega)$, $a_{ij}, b_i, d \in L^\infty(\Omega)\cap W^{1,4}(\Omega)$ and $f\in H^1(\Omega)$. Furthermore, let $D_m(y_{\gamma,c}), D_m(p_{\gamma,c}) \in H^1_0(\Omega)$ for all $\gamma,c >0$.
		Then the shape derivatives of $\mathcal{J}$ defined in (\ref{objective}) constrained by a fully regularized VI (\ref{SmoothedState}) in direction of a vector field $V\in H^1_0(\Omega, \R^n)$ exist and are given by
		\begin{align}\label{shape_deriv}
		\begin{split}
		& D\mathcal{J}_{\gamma, c}(\Omega)[V] \\ &= \int_{\Omega}-(y_{\gamma, c}-\bar{y})\nabla\bar{y}^T V - \nabla y_{\gamma,c}^T (\nabla V^T M - \nabla M\cdot V + M^T\nabla V   ) \nabla p_{\gamma,c} \\
		&\hspace{.9cm}+ (\nabla b^TV)y_{\gamma,c}p_{\gamma,c}+ y_{\gamma,c}\cdot((\nabla d^T V)^T\nabla p_{\gamma,c} - d^T(\nabla V\nabla p_{\gamma,c}))\\
		&\hspace{.9cm}	+ p_{\gamma,c}\cdot((\nabla d^T V)^T\nabla y_{\gamma,c} - d^T(\nabla V\nabla y_{\gamma,c})) \\
		& \hspace{.9cm}- c\cdot \textup{sign}_{\gamma}(\bar{\lambda} + c\cdot(y_{\gamma, c} - \varphi))\cdot\nabla \varphi^T V\cdot p_{\gamma,c} - \nabla f^TVp_{\gamma, c} \\
		&\hspace{.9cm}
		+ \textup{div} (V)\Big( \frac{1}{2}(y_{\gamma, c} -\bar{y})^2 + by_{\gamma,c} p_{\gamma,c} + \underset{i,j}{\sum}   a_{i,j}\partial_i y_{\gamma,c} \partial_j p_{\gamma,c}  \\
		& \hspace{2.6cm} + \sum_id_i (\partial_iy_{\gamma,c}p_{\gamma,c}+ y_{\gamma,c}\partial_i p_{\gamma,c}) \\
		& \hspace{2.6cm}+ \textup{max}_{\gamma}\big(\bar{\lambda} + c\cdot(y_{\gamma, c} - \varphi)\big)p_{\gamma, c} - fp_{\gamma, c} \Big) \; dx \,.
		\end{split}
		\end{align}
	\end{theorem}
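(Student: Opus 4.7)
The natural route is the classical Lagrangian/velocity method of Soko\l owski--Zol\'esio: since \cref{Main_theorem_1} guarantees the existence of an $H^1_0$-adjoint $p_{\gamma,c}$ for the fully regularized problem, the shape derivative should coincide with the partial shape derivative of a Lagrangian evaluated at $(y_{\gamma,c},p_{\gamma,c})$. Concretely I would define
\begin{equation*}
L(\Omega,y,p) \;=\; \tfrac12\!\int_\Omega (y-\bar y)^2\,dx \;+\; a(y,p) \;+\; \int_\Omega \textup{max}_\gamma\!\big(\bar\lambda+c(y-\varphi)\big)\,p\,dx \;-\; \int_\Omega fp\,dx,
\end{equation*}
so that $\mathcal J_{\gamma,c}(\Omega)=L(\Omega,y_{\gamma,c},p_{\gamma,c})$ whenever the state equation \cref{SmoothedState} holds. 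Perturb $\Omega$ by $T_t=\mathrm{id}+tV$ and set $\Omega_t=T_t(\Omega)$, $y^t=y_{\gamma,c}(\Omega_t)\circ T_t$, $p^t=p_{\gamma,c}(\Omega_t)\circ T_t$. Then by the chain rule
\begin{equation*}
D\mathcal J_{\gamma,c}(\Omega)[V] \;=\; \partial_\Omega L(\Omega,y_{\gamma,c},p_{\gamma,c})[V] \;+\; \partial_y L\cdot \dot y_{\gamma,c} \;+\; \partial_p L\cdot \dot p_{\gamma,c}.
\end{equation*}
The last term vanishes because $y_{\gamma,c}$ satisfies \cref{SmoothedState}, and the second vanishes because $p_{\gamma,c}$ satisfies the linearized adjoint \cref{SmoothedAdjoint} and $\dot y_{\gamma,c}\in H^1_0(\Omega)$ is an admissible test function (this is exactly where the assumption $D_m(y_{\gamma,c})\in H^1_0(\Omega)$ is used). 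Thus only the partial shape derivative $\partial_\Omega L[V]$ needs to be computed.

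\textbf{Carrying out the differentiation.} I would apply the standard pullback formulas: $|\det DT_t|'|_{t=0}=\textup{div}(V)$, $(DT_t^{-T})'|_{t=0}=-\nabla V^T$, and, for any ambient function $h\in\{\bar y,\varphi,f,a_{ij},d_i,b\}$, $\frac{d}{dt}|_{t=0} h(T_t(\xi))=\nabla h^T V$. The zeroth-order pieces ($\tfrac12(y-\bar y)^2$, $byp$, the obstacle integrand and $fp$) only produce gradient-times-$V$ and $\textup{div}(V)$ contributions, which give the first line of \cref{shape_deriv}, the $\nabla b^TV$ term, the obstacle line (note: here I would use that $\textup{max}_\gamma\in C^1$ with derivative $\textup{sign}_\gamma$ to apply the chain rule and produce $-c\,\textup{sign}_\gamma(\cdot)\nabla\varphi^T V\cdot p_{\gamma,c}$, the state $y_{\gamma,c}$-variation being absorbed through the adjoint), the $\nabla f^TV p_{\gamma,c}$ term, and all the $\textup{div}(V)$ boundary-preserving contributions. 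For the leading-order part $\int_\Omega \nabla y^T M\nabla p\,dx$ I would pull back to obtain $\int_\Omega (DT_t^{-T}\nabla y^t)^T M(T_t)(DT_t^{-T}\nabla p^t)|\det DT_t|\,d\xi$ and differentiate, producing the symmetric bracket $-(\nabla V^T M - \nabla M\cdot V + M^T\nabla V)$ between $\nabla y_{\gamma,c}$ and $\nabla p_{\gamma,c}$ plus the $\textup{div}(V)$-piece. The mixed first-order terms with $d_i$ are handled analogously, using $(d_i\partial_i y)(T_t)=d_i(T_t)\cdot \big((DT_t^{-T}\nabla y^t)\big)_i$.

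\textbf{Main obstacle and loose ends.} The principal delicacy is not the algebra but the justification of differentiating under the integral. Three points need care. First, the $\dot y$/$\dot p$ cancellation requires $\dot y_{\gamma,c},\dot p_{\gamma,c}\in H^1_0(\Omega)$, which is precisely the hypothesis given; with it one can formally test the linearized state equation against $p_{\gamma,c}$ and the adjoint equation against $\dot y_{\gamma,c}$, since $\textup{sign}_\gamma$ is bounded and $\textup{max}_\gamma\in C^1$ makes the linearization well defined. Second, the pulled-back integrals must be differentiable in $t$ uniformly in $\xi$; this is where the $W^{1,4}$-regularity of $y_{\gamma,c},p_{\gamma,c}$ and of the coefficients enters, since products like $\nabla V^T\nabla y_{\gamma,c}$ tested against $M\nabla p_{\gamma,c}$ must be integrable for $V\in H^1_0(\Omega,\mathbb R^n)\hookrightarrow L^4$ (again using $n\le 4$ as in \cref{EmbeddingH1inL4}). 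Third, since $V\in H^1_0(\Omega,\mathbb R^n)$ all boundary traces on $\partial\Omega$ vanish, so no Hadamard boundary terms appear and the final expression is a pure volume integral, matching \cref{shape_deriv}. Collecting the contributions term by term then yields exactly the stated formula.
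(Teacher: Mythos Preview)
Your derivation of the formula follows the same Lagrangian/saddle-point route as the paper: set up $L(\Omega,y,p)$, differentiate each integrand via material-derivative/pullback rules, and eliminate the $\dot y_{\gamma,c}$, $\dot p_{\gamma,c}$ contributions by testing the state and adjoint equations with them. The term-by-term computations you sketch (leading-order, first-order in $d$, zeroth-order, the $\textup{max}_\gamma$ nonlinearity via the chain rule producing $-c\,\textup{sign}_\gamma(\cdot)\nabla\varphi^TV\,p_{\gamma,c}$) match the paper's calculations exactly, and your integrability bookkeeping via $W^{1,4}$ and the embedding $H^1_0\hookrightarrow L^4$ for $n\le 4$ is also the paper's mechanism.

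Where the paper proceeds differently is on \emph{existence} of $D\mathcal J_{\gamma,c}(\Omega)[V]$. You treat this informally (``the pulled-back integrals must be differentiable in $t$'') and rely on the hypothesis $D_m(y_{\gamma,c}),D_m(p_{\gamma,c})\in H^1_0(\Omega)$ throughout. The paper instead separates the two issues: it first establishes existence of the shape derivative by verifying the hypotheses (H0)--(H3) of Sturm's averaged adjoint theorem---showing uniform $H^1_0$-bounds on the retracted states $y^t$ and averaged adjoints $q^t$, extracting weakly convergent subsequences, and passing to the limit---so that existence holds without invoking the material derivatives at all. Only afterwards does it use the assumption $D_m(y_{\gamma,c}),D_m(p_{\gamma,c})\in H^1_0(\Omega)$ to carry out the explicit computation and obtain the closed-form expression. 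The payoff of the paper's route is a clean decoupling (cf.\ the remark following the theorem: existence does not need the material-derivative hypothesis, only the formula does); your route is shorter but leaves the rigorous justification of the $t$-differentiability of $t\mapsto L(\Omega_t,y^t,p^t)$ as an acknowledged but unfinished step.
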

	
	\begin{proof}
		Let us consider the shape optimization problem with fully regularized state equations with parameters $\gamma, c >0 $ as in (\ref{SmoothedState}) and fixed shape  $\Gi$ to derive corresponding shape derivative. 
		The first part of the proof, consisting of the existence of shape derivatives $D\mathcal{J}_{\gamma,c}$ for all $\gamma,c>0$, is found in \cref{Proof_ExistenceShapeDeriv}.	
		
		As the second part of the proof, we derive the shape derivative expression. Note that \cref{shape_deriv} and following integrals are well defined due to our assumptions on integrability combined with  \cref{EmbeddingH1inL4} and dimension $n\leq4$. By applying standard shape calculus techniques (cf.~\cite{berggren2010unified,welker2016efficient}) to the target functional part of the Lagrangian we get
		\begin{align}\label{ShapeDirBilin1}
		\begin{split}
		&D\Big(\frac{1}{2}\int_{\Omega} (y_{\gamma, c} -\bar{y})^2dx\Big)[V]\\&= \int_{\Omega} (y_{\gamma, c} -\bar{y})(D_m(y_{\gamma,c}) - D_m(\bar{y})) +  \frac{1}{2}\textup{div}(V)(y_{\gamma, c} -\bar{y})^2dx \\
		&= \int_{\Omega} (y_{\gamma, c} -\bar{y})D_m(y_{\gamma,c})dx + \int_{\Omega} -(y_{\gamma, c} -\bar{y})\nabla \bar{y}^TV + 
		\frac{1}{2}\textup{div}(V)(y_{\gamma, c} -\bar{y})^2 dx,
		\end{split} 
		\end{align}
		since the target $\bar{y}\in L^2(\Omega)$ does not depend on the shape.
		Next, as similarly found in, e.g., \cite{welker2016efficient}, we calculate the shape derivative of the bilinear form $a(\cdot,\cdot)$. For avoiding confusion with the active sets $A$ and $A_c$, we  call the coefficient matrix $(a_{i,j})_{i,j}$ of the leading order parts of the bilinear form $M$. As before we have
		\begin{align}
		\begin{split}
		D(a(y_{\gamma,c}, p_{\gamma,c}))[V] =& \int_{\Omega}D_m(a(y_{\gamma,c}, p_{\gamma,c})) + \textup{div}(V)\Big(\underset{i,j}{\sum}   a_{i,j}\partial_iy_{\gamma,c}\partial_j p_{\gamma,c} \\
		&+ \sum_i d_i (\partial_i y_{\gamma,c} p_{\gamma,c} + y_{\gamma,c}\partial_i p_{\gamma,c}) + by_{\gamma,c}p_{\gamma,c}\Big)dx.
		\end{split}
		\end{align}
		We use linearity, chain rules, product rules and gradient identities for the material derivative $D_m(\cdot)$, as found in \cite{berggren2010unified}, to reformulate $D_m\big(a(y_{\gamma,c}, p_{\gamma,c})\big)$. For readability, we analyze each term individually. We start with the leading order terms:
		\begin{align*}
		& \hspace{-.3cm}D_m\Big(\underset{i,j}{\sum}  a_{i,j}\partial_iy_{\gamma,c}\partial_jp_{\gamma,c}\Big) \\ =& \underset{i,j}{\sum} D_m(a_{i,j})\partial_iy_{\gamma,c}\partial_jp_{\gamma,c} + a_{i,j}D_m(\partial_iy_{\gamma,c})\partial_jp_{\gamma,c} + a_{i,j}\partial_iy_{\gamma,c}D_m(\partial_jp_{\gamma,c}) \\
		=&
		\underset{i,j}{\sum} D_m(a_{i,j})\partial_iy_{\gamma,c}\partial_jp_{\gamma,c} + a_{i,j}\Big((\partial_i D_m(y_{\gamma,c}) - \underset{k}{\sum} \partial_k y_{\gamma,c}\partial_i V_k )\partial_jp_{\gamma,c} \\
		&\quad+ \partial_iy_{\gamma,c}(\partial_j D_m(p_{\gamma,c}) - \underset{k}{\sum} \partial_k p_{\gamma,c}\partial_j V_k )\Big) \\
		=&
		\underset{i,j}{\sum}\Big( \nabla a_{i,j}^TV\partial_iy_{\gamma,c}\partial_jp_{\gamma,c} + a_{i,j}\partial_i D_m(y_{\gamma,c})\partial_j p_{\gamma,c} + a_{i,j}\partial_i y_{\gamma,c}\partial_j D_m(p_{\gamma,c}) \\
		& \quad-
		a_{i,j} (\partial_iV^T\nabla y_{\gamma,c}) \partial_j p_{\gamma,c} - a_{i,j} \partial_i y_{\gamma,c}(\partial_jV^T\nabla p_{\gamma,c})\Big)	\\
		=&
		\nabla y_{\gamma,c}^T(\nabla M^T V)\nabla p_{\gamma,c} + \underset{i,j}{\sum}\Big(a_{i,j}\partial_i D_m(y_{\gamma,c})\partial_j p_{\gamma,c} + a_{i,j}\partial_i y_{\gamma,c}\partial_j D_m(p_{\gamma,c})\Big) \\
		&-\nabla y_{\gamma,c}^T(\nabla V^T M)\nabla p_{\gamma,c} -\nabla y_{\gamma,c}^T(M\nabla V )\nabla p_{\gamma,c}
		\end{align*}
		For the first order terms of $a(\cdot,\cdot)$ we only compute  $y_{\gamma,c} d^T \nabla p_{\gamma,c}$, since calculations are analogous for the second term by switching the roles of $y_{\gamma,c}$ and $p_{\gamma,c}$. We get
		\begin{align*}
		& \hspace{-.3cm}D_m(y_{\gamma,c} d^T \nabla p_{\gamma,c})\\
		=& D_m(y_{\gamma,c}) d^T\nabla p_{\gamma,c} + y_{\gamma,c}\cdot\underset{i}{\sum}(D_m(d_i)\partial_ip_{\gamma,c} + d_i D_m(\partial_i p_{\gamma,c})) \\
		=&
		D_m(y_{\gamma,c}) d^T\nabla p_{\gamma,c} \\&+ \underset{i}{\sum}\Big(y_{\gamma,c}(\nabla d_i^TV)\partial_ip_{\gamma,c} + y_{\gamma,c}d_i\partial_i D_m(p_{\gamma,c}) - \underset{k}{\sum}(y_{\gamma,c}d_i \partial_k p_{\gamma,c}\partial_i V_k)\Big) \\
		=&
		D_m(y_{\gamma,c})d^T\nabla p_{\gamma,c} + y_{\gamma,c}(\nabla d ^T V)^T \nabla p_{\gamma,c} +
		y_{\gamma,c} d^T\nabla D_m(p_{\gamma,c}) - y_{\gamma,c} d^T (\nabla V \nabla p),
		\end{align*}
		where we again use shape independence of the coefficient functions of $a(\cdot,\cdot)$.
		For the term of order zero we apply the product rule for material derivatives and shape independence of coefficient functions:
		\begin{align*}
		D_m(by_{\gamma,c}p_{\gamma,c})= (\nabla b^T V)y_{\gamma,c}p_{\gamma,c} + b D_m(y_{\gamma,c})p_{\gamma,c} + by_{\gamma,c}D_m(p_{\gamma,c}) 
		\end{align*}
		Combining these formulas, plugging them into 
		\cref{ShapeDirBilin1} and collecting all material derivatives of $y_{\gamma,c}$ and $p_{\gamma,c}$ result in the shape derivative of the bilinear form $a(\cdot,\cdot)$:
		\begin{align*}
		\begin{split}
		& \hspace{-.3cm}D\Big(a(y_{\gamma, c}, p_{\gamma,c})\Big)[V] \\=& \quad a(D_m(y_{\gamma,c}), p_{\gamma,c}) + a(y_{\gamma,c}, D_m(p_{\gamma,c}))  \\
		&+ \int_{\Omega} \nabla y_{\gamma,c}^T(\nabla M^T V - \nabla V^T N - M\nabla V)\nabla p_{\gamma,c} \\
		&+ y_{\gamma,c}\cdot((\nabla d^T V)^T\nabla p_{\gamma,c} - d^T(\nabla V\nabla p_{\gamma,c})) \\
		&+ 
		p_{\gamma,c}\cdot((\nabla d^T V)^T\nabla y_{\gamma,c} - d^T(\nabla V\nabla y_{\gamma,c})) 
		+ (\nabla b^TV)y_{\gamma,c}p_{\gamma,c} \\
		&+ \textup{div}(V)\Big(\underset{i,j}{\sum}   a_{i,j}\partial_iy_{\gamma,c}\partial_j p_{\gamma,c} 
		+ \sum_i d_i (\partial_i y_{\gamma,c} p_{\gamma,c} + y_{\gamma,c}\partial_i p_{\gamma,c}) + by_{\gamma,c}p_{\gamma,c}\Big)dx
		\end{split}
		\end{align*}
		The shape derivative of the term including $\max_{\gamma}$ is calculated by chain rule, which is applicable since we assume sufficient smoothness of $\max_{\gamma}$:
		\begin{align*}
		& \hspace{-.3cm}D\Big((\textup{max}_\gamma\big(\bar{\lambda} + c\cdot(y_{\gamma,c} - \varphi)\big),p_{\gamma,c})_{L^2(\Omega)}\Big)[V]\\ =&
		\int_{\Omega}  D_m\Big((\textup{max}_\gamma\big(\bar{\lambda} + c\cdot(y_{\gamma,c} - \varphi)\big),p_{\gamma,c})_{L^2(\Omega)}\Big) \\&\hspace{.5cm}+ \textup{div}(V)\textup{max}_\gamma\big(\bar{\lambda} + c\cdot(y_{\gamma,c} - \varphi)\big)p_{\gamma,c}dx
		\end{align*}
		\begin{align*}
		&\hspace{-.3cm}	D_m\Big((\textup{max}_\gamma\big(\bar{\lambda} + c\cdot(y_{\gamma,c} - \varphi)\big),p_{\gamma,c})_{L^2(\Omega)}\Big)\\ =&
		\big(\textup{sign}_\gamma(\bar{\lambda} + c\cdot(y_{\gamma,c} - \varphi)) D_m(\bar{\lambda} + c\cdot(y_{\gamma,c} - \varphi)), p_{\gamma,c}\big)_{L^2(\Omega)} \\
		&+ \big(\textup{max}_\gamma\big(\bar{\lambda} + c\cdot(y_{\gamma,c} - \varphi)\big),D_m(p_{\gamma,c})\big)_{L^2(\Omega)} \\
		=&
		-c\cdot\big(\textup{sign}_\gamma(\bar{\lambda} + c\cdot(y_{\gamma,c} - \varphi)) \nabla\varphi^TV, p_{\gamma,c}\big)_{L^2(\Omega)} \\
		& + \big(c\cdot\textup{sign}_\gamma(\bar{\lambda} + c\cdot(y_{\gamma,c} - \varphi)) p_{\gamma,c}, D_m(y_{\gamma,c})\big)_{L^2(\Omega)} \\
		&+
		\big(\textup{max}_\gamma\big(\bar{\lambda} + c\cdot(y_{\gamma,c} - \varphi)\big),D_m(p_{\gamma,c})\big)_{L^2(\Omega)},
		\end{align*}
		due to $D_m(\bar{\lambda}) = 0$ and $D_m(\varphi) = \nabla\varphi^TV$, as $\varphi$ is invariant under perturbation of the domain by problem definition.
		The shape derivative of the last term in the Lagrangian (\ref{LagrangianGammaC}) is given by a simple product rule
		\begin{align*}
		D\big( (f, p_{\gamma,c})_{L^2(\Omega)}\big)[V] = (D_m(f), p_{\gamma,c})_{L^2(\Omega)} + \big(f, D_m(p_{\gamma,c})\big)_{L^2(\Omega)}.
		\end{align*}
		We now use the assumptions  $D_m(y_{\gamma,c}), D_m(p_{\gamma,c})\in H^1_0(\Omega)$. 
		If we rearrange the terms with $D_m(y_{\gamma,c})$ and $D_m(p_{\gamma,c})$ acting as test functions and applying the saddle point conditions, which means that the state equation (\ref{SmoothedState}) and adjoint equation (\ref{SmoothedAdjoint}) are fulfilled, the terms consisting $D_m(y_{\gamma,c})$ and $D_m(p_{\gamma,c})$ cancel.  
		By adding all terms of \cref{LagrangianGammaC}, the shape derivative $D\mathcal{J}_{\gamma,c}(\Omega)[V]$ as in (\ref{shape_deriv}) is established. 
	\end{proof}

	\begin{remark}
		One can fulfill the assumptions of the averaged adjoint theorem and, thus, guarantee existence of the shape derivative without a computation of the material derivatives $D_m(y_{\gamma,c}), D_m(p_{\gamma,c})$. The assumption $D_m(y_{\gamma,c}), D_m(p_{\gamma,c}) \in H^1_0(\Omega)$ for all $\gamma,c >0$ in theorem~\ref{MainTheorem2} is only needed in order to calculate the shape derivative expression (\ref{shape_deriv}).
	\end{remark}
	
	\begin{remark}\label{MaterialDerivY}
		The assumption $D_m(y_{\gamma, c})\in H^1_0(\Omega)$ in \cref{MainTheorem2} is needed to apply the saddle point conditions and get the closed form of the shape derivative. For this, it is sufficient that $y_{\gamma,c} \in H^2_0(\Omega)$. For example, this regularity can be ensured by additionally assuming $a_{i,j}\in C^1(\bar{\Omega})$, $d\equiv0$, $b\equiv0$ for the coefficients of the strongly elliptic bilinear form $a(\cdot,\cdot)$ together with $\varphi >0$ and choosing $\bar{\lambda}\in L^\infty(\Omega)$.
		The latter two assumptions imply that the maximal monotone Nemetskii-operator (\ref{NymetskiyGamma}) is equal to $0$ for $y_{\gamma,c} = 0$ and sufficiently large $\gamma, c > 0$. In combination with the former assumptions, \cite[Theorem A.1.]{bonnans1991pontryagin} can be applied to get $y_{\gamma,c}\in H^2_0(\Omega)$ for all sufficiently large $\gamma,c >0$.
	\end{remark}
	
	\begin{remark}\label{MaterialDerivP}
		The assumption $D_m(p_{\gamma, c})\in H^1_0(\Omega)$ in \cref{MainTheorem2} can be fullfilled, e.g., by assuming additional regularity $a_{i,j}\in C^1(\bar{\Omega})$ of the leading coefficients of the bilinear form $a(.,.)$ and $C^2$-regularity of the boundary $\partial \Omega$. This together with the fact that $c\cdot\textup{sign}_\gamma(\bar{\lambda} + c\cdot(y_{\gamma,c} - \varphi)) \in L^\infty(\Omega)$ acts as part of the coefficient function of the zero order terms permits application of a regularity theorem for linear elliptic problems (cf. \cite[p. 317, theorem 4]{Evan}) giving $p_{\gamma,c} \in H^2(\Omega)$. This in turn guarantees $D_m(p_{\gamma, c})\in H^1_0(\Omega)$.
	\end{remark}
	
	Next, we formulate the second main theorem of this section, which states the convergence of the shape derivatives of the fully regularized problem.
	\begin{theorem}\label{MainTheorem3}
		Assume the setting of the shape optimization problem formulated in (\ref{section_ModelProblem}). Let the assumptions of \cref{Main_theorem_1} hold and $\varphi \in H^2(\Omega)$. Moreover, let $M := (a_{i,j})_{i,j = 1,2}$ be the matrix of coefficient functions to the leading order terms in (\ref{bilinearform}).
		Then, for all $V\in H_0^1(\Omega, \R^n)$, the shape derivatives $D\mathcal{J}_{\gamma, c}(\Omega)[V]$ in (\ref{shape_deriv}) converge to $D\mathcal{J}(\Omega)[V]$ for $\gamma, c \rightarrow \infty$, where
		\begin{align}\label{UnreguShapeDeriv}
		\begin{split}
		&\hspace{-1cm} D\mathcal{J}(\Omega)[V] :=\\ \int_{\Omega} & -(y-\bar{y})\nabla\bar{y}^T V - \nabla y^T (\nabla V^T M - \nabla M\cdot V + M^T\nabla V   ) \nabla p \\
		&+ y\cdot((\nabla d^T V)^T\nabla p - d^T(\nabla V\nabla p))
		+ p\cdot((\nabla d^T V)^T\nabla y - d^T(\nabla V\nabla y)) \\
		& + (\nabla b^TV)yp - \nabla f^TVp 
		\\&+ \textup{div} (V)\Big( \frac{1}{2}(y_{\gamma, c} -\bar{y})^2 + \underset{i,j}{\sum}   a_{i,j}\partial_i y \partial_j p \\&+ \sum_id_i (\partial_iyp+ y\partial_i p) 
		+ by p - fp \Big) \; dx 
		+ \int_{A}(\varphi - \bar{y})\nabla \varphi^TV\;dx.
		\end{split}
		\end{align}
	\end{theorem}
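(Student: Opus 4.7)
The approach is to decompose the shape-derivative expression \eqref{shape_deriv} into three groups and pass to the limit in each group separately. Group~(A) collects every summand that is a continuous multilinear form in $(y_{\gamma,c},\nabla y_{\gamma,c},p_{\gamma,c},\nabla p_{\gamma,c})$ paired against shape-independent data ($\bar y,f,\varphi$ and their derivatives) and the velocity $V\in H^1_0(\Omega,\mathbb{R}^n)$; group~(B) is the single $\textup{div}(V)\cdot\textup{max}_\gamma(\cdots)\cdot p_{\gamma,c}$ term in \eqref{shape_deriv}; and group~(C) is the single $-c\cdot\textup{sign}_\gamma(\cdots)\cdot\nabla\varphi^T V\cdot p_{\gamma,c}$ term. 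Group~(A) will converge directly to the bulk of $D\mathcal{J}(\Omega)[V]$ in \eqref{UnreguShapeDeriv}, group~(B) will vanish in the limit, and group~(C) will produce exactly the extra term $\int_A(\varphi-\bar y)\nabla\varphi^T V\,dx$.

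For group~(A), I would combine the $H^1$-convergences $y_{\gamma,c}\to y_c\to y$ and $p_{\gamma,c}\to p_c\to p$ supplied by \cref{Theorem0} and \cref{Main_theorem_1} with the Sobolev embedding $H^1(\Omega)\hookrightarrow L^4(\Omega)$ for $n\leq 4$ (cf.~\eqref{EmbeddingH1inL4}) and H\"older's inequality with exponents $(4,4,2)$, to pass to the limit in all products of two gradient factors with $V$ or $\nabla V$. The regularity assumed on the coefficient functions and on $f$ ensures no further integrability issues arise. For group~(B), \cref{AssumptionsOnMax}(ii) together with $y_{\gamma,c}\to y_c$ in $H^1$ yields $\textup{max}_\gamma(\bar\lambda+c(y_{\gamma,c}-\varphi))\to\lambda_c:=\max(0,\bar\lambda+c(y_c-\varphi))$ in $L^2(\Omega)$ as $\gamma\to\infty$; then $\lambda_c\to\lambda$ in $L^2(\Omega)$ by \cite[Thm.~3.1]{ItoKunisch_VI}, while $p_c\to p$ in $H^1(\Omega)$ by \cref{Main_theorem_1}. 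The complementarity encoded in \eqref{VI_conditions} forces $\lambda\cdot p\equiv 0$ in $\Omega$ (since $p\equiv 0$ on $A$ by \eqref{adjoint_unregu} and $\lambda\equiv 0$ on $\Omega\setminus A$), so this whole group vanishes.

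The analytical heart lies in group~(C),
\begin{equation*}
T_{\gamma,c}:=-\int_\Omega c\cdot\textup{sign}_\gamma\bigl(\bar\lambda+c(y_{\gamma,c}-\varphi)\bigr)\nabla\varphi^T V\cdot p_{\gamma,c}\,dx.
\end{equation*}
Passing $\gamma\to\infty$ first, assumption~(iv) of \cref{Main_theorem_1} (upgraded from $L^1$ to $L^2$ by the same interpolation argument employed in the proof of \cref{Main_theorem_1}) together with $p_{\gamma,c}\to p_c$ in $H^1\hookrightarrow L^4$ and $\nabla\varphi^T V\in L^4$ (using $\varphi\in H^2$) gives $T_{\gamma,c}\to T_c:=-c\int_{A_c}\nabla\varphi^T V\cdot p_c\,dx$. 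For the subsequent limit $c\to\infty$, I would test the restricted adjoint equation~\eqref{adjoint_limitproblem} against $v\in H^1_0(A)$ extended by zero to~$\Omega$, obtaining
\begin{equation*}
a_A(p_c\vert_A,v)+c\int_A\mathbbm{1}_{A_c}p_c\,v\,dx=-\int_A(y_c-\bar y)v\,dx.
\end{equation*}
Since $\|p_c\|_{H^1(A)}\to 0$ by part~2 of the proof of \cref{Main_theorem_1} and $y_c\to y=\varphi$ a.e.\ on~$A$, this identifies the weak limit of $c\cdot\mathbbm{1}_{A_c}p_c$ on~$A$ as $-(\varphi-\bar y)\mathbbm{1}_A$. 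Upgrading the mode of convergence sufficiently to test against $\nabla\varphi^T V|_A$ then yields $T_c\to\int_A(\varphi-\bar y)\nabla\varphi^T V\,dx$.

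The hardest step will be this final passage: the test field $\nabla\varphi^T V$ need not vanish on~$\partial A$, so the convergence obtained via $H^1_0(A)$-test functions alone is a priori insufficient. The remedy has to exploit the penalization structure together with the inclusion $A_c\subseteq A$ (assumption~(iii) of \cref{Main_theorem_1}) and the regular decomposability of~$A$, which guarantees that $\partial A$ is a Lipschitz submanifold and makes trace-based arguments on $\partial A$ admissible. Concretely, one can either establish strong $L^2(A)$-convergence of $c\cdot\mathbbm{1}_{A_c}p_c$ via an energy estimate on the penalized restricted problem, or decompose $\nabla\varphi^T V|_A=w+r$ into an $H^1_0(A)$-part~$w$ and a boundary remainder~$r$ whose contribution $c\int_{A_c}p_c\, r\,dx$ is shown to vanish as $c\to\infty$ using the localization of the support of $c\cdot\mathbbm{1}_{A_c}p_c$. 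With this last ingredient in place, all three groups have been handled and the claimed identity \eqref{UnreguShapeDeriv} follows.
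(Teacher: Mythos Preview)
Your handling of group~(A) and group~(B) is fine and matches the paper in spirit. The decisive difference is in group~(C), and there you leave a genuine gap. You try to pass to the limit \emph{directly} in the product $c\cdot\textup{sign}_\gamma(\cdots)\,p_{\gamma,c}$, which forces you to identify the weak limit of $c\,\mathbbm{1}_{A_c}p_c$ on $A$ and then to test it against $\nabla\varphi^T V$, a function that does not vanish on $\partial A$. You recognise the obstruction and sketch two remedies (an $L^2(A)$ energy bound for $c\,\mathbbm{1}_{A_c}p_c$, or a splitting $\nabla\varphi^T V=w+r$ with $w\in H^1_0(A)$), but neither is carried out, and neither is straightforward: there is no a priori reason for $c\,\mathbbm{1}_{A_c}p_c$ to be bounded in $L^2(A)$ (only in $H^{-1}$, via the equation), and the boundary remainder $c\int_{A_c}p_c\,r\,dx$ has no obvious mechanism for vanishing.

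The paper sidesteps this entirely by a trick you did not use: \emph{before} taking any limit, it substitutes the regularized state and adjoint equations \eqref{SmoothedState}--\eqref{SmoothedAdjoint} with the test functions $v=p_{\gamma,c}\,\textup{div}(V)$ and $v=\nabla\varphi^T V$ (both lie in $H^1_0(\Omega)$ for $V\in C^\infty_0$, since $\varphi\in H^2$). This eliminates the factors $\textup{max}_\gamma(\cdots)$ and $c\cdot\textup{sign}_\gamma(\cdots)$ altogether, rewriting your $T_2$ and $T_1$ as
\[
T_2(V)=-a(y_{\gamma,c},\,p_{\gamma,c}\textup{div}(V))+(f,\,p_{\gamma,c}\textup{div}(V))_{L^2},\qquad
T_1(V)=a(p_{\gamma,c},\nabla\varphi^T V)+(y_{\gamma,c}-\bar y,\nabla\varphi^T V)_{L^2}.
\]
These expressions are now continuous in $(y_{\gamma,c},p_{\gamma,c})\in H^1_0\times H^1_0$, so the limits $\gamma,c\to\infty$ follow at once from \cref{Theorem0} and \cref{Main_theorem_1}; a density argument then extends from $C^\infty_0$ to $H^1_0$ vector fields. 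The extra term $\int_A(\varphi-\bar y)\nabla\varphi^T V$ falls out of the \emph{algebraic} simplification of $\tilde T_1+\tilde T_2$ using $p|_A=0$, $y|_A=\varphi$ and $\lambda p=0$, rather than from a delicate convergence argument. This is the missing idea in your proposal.
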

	
	\begin{proof}
		We see that (\ref{shape_deriv}) already resembles (\ref{UnreguShapeDeriv}) except for the two terms
		\begin{align}
		\label{1TermThrm2}
		T_1(V)& :=- c\cdot\int_{\Omega}\textup{sign}_{\gamma}(\bar{\lambda} + c\cdot(y_{\gamma, c} - \varphi))\cdot \nabla \varphi^T V\cdot p_{\gamma,c} \;dx\,\\
		\label{2TermThrm2}
		T_2(V) & :=\int_{\Omega}\text{div}(V)\cdot \textup{max}_{\gamma}\big( \bar{\lambda} + c\cdot(y_{\gamma, c} - \varphi)\big)\cdot p_{\gamma, c}\;dx.
		\end{align}	

		We proceed in two steps: First, we show convergence for $T_1$ and $T_2$ as restricted operators on $C^\infty_0(\Omega, \mathbb{R}^n)$. Second, we show that the limiting operators can be continuously extended to $H^1_0(\Omega, \mathbb{R}^n)$.
		
		Let $V\in C^\infty_0(\Omega, \R^n)$. 
		By this, we have $\textup{div}(V)\cdot p_{\gamma,c}, \nabla \varphi ^TV\in H^1_0(\Omega)$ for all $\gamma,c >0$, which enables to use these functions  as test functions for the state and adjoint equations. This leads to
		\begin{align*}
		T_1(V) =&  -c\cdot\int_{\Omega}\textup{sign}_{\gamma}(\bar{\lambda} + c\cdot(y_{\gamma, c} - \varphi))\cdot \nabla \varphi^T V\cdot p_{\gamma,c} \;dx \\
		=& \;
		a(p_{\gamma,c}, \nabla\varphi^TV) + (y_{\gamma,c}-\bar{y}, \nabla\varphi^TV)_{L^2(\Omega)} \\
		\rightarrow& \;
		a(p, \nabla\varphi^TV) + (y-\bar{y}, \nabla\varphi^TV)_{L^2(\Omega)} =: \tilde{T}_1(V) \quad \text{for } \gamma,c\rightarrow \infty
		\end{align*}
		and 
		\begin{align*}
		T_2(V)=&
		\int_{\Omega}\text{div}(V)\cdot \textup{max}_{\gamma}\big( \bar{\lambda} + c\cdot(y_{\gamma, c} - \varphi)\big)\cdot p_{\gamma, c}\;dx \\
		=& \;
		-a(y_{\gamma,c}, p_{\gamma,c}\cdot\textup{div}(V)) + (f, p_{\gamma,c}\cdot\textup{div}(V))_{L^2(\Omega)} \\
		\rightarrow& \;
		-a(y, p\cdot\textup{div}(V)) + (f, p\cdot\textup{div}(V))_{L^2(\Omega)} =: \tilde{T}_2(V) \quad \text{for } \gamma,c\rightarrow \infty
		\end{align*}
		due to \cref{Main_theorem_1} \cref{Theorem0} and our assumption $\varphi \in H^2(\Omega)$.
		
		Next, we lift the convergence from $V\in C^\infty_0(\Omega, \mathbb{R}^n)$ to $H^1_0(\Omega, \mathbb{R}^n)$ by continuous extension. Since $C^\infty_0(\Omega, \mathbb{R}^n)$ is a dense subspace of $H^1_0(\Omega, \mathbb{R}^n)$ and the latter being the completion of the former by the $\Vert \cdot \Vert_{H^1_0(\Omega, \mathbb{R}^n)}$ norm, it is sufficient to show that the limits of $T_1(V_n)$ and $T_2(V_n)$ form a Cauchy sequence for a given  Cauchy sequence $(V_n)_{n\in \mathbb{N}}\subset C^\infty_0(\Omega, \mathbb{R}^n)$ under the $\Vert \cdot \Vert_{H^1_0(\Omega, \mathbb{R}^n)}$-norm.
		So let $(V_n)_{n\in \mathbb{N}}\subset C^\infty_0(\Omega, \mathbb{R}^n)$ with $\Vert V_n - V_m \Vert_{H^1_0(\Omega, \mathbb{R}^n)} \rightarrow 0$ for $m,n \rightarrow \infty$. 
		For the limit of $T_1$ we have
		\begin{align*}
		&\hspace{-.3cm}\vert \tilde{T}_1(V_n) - \tilde{T}_1(V_m) \vert \\
		=&
		\vert a(p, \nabla\varphi^T(V_n - V_m)) + (y-\bar{y}, \nabla \varphi ^T(V_n - V_m))_{L^2(\Omega)} \vert\\
		\leq&
		\Big(\underset{i,j}{\sum}\Vert a_{i,j}\Vert_{L^\infty(\Omega)} + \underset{j}{\sum} \Vert d_j \Vert_{L^\infty(\Omega)} + \Vert b \Vert_{L^\infty(\Omega)}\Big)\\ 
		&\cdot\int_{\Omega}\underset{i,j}{\sum}\partial_i p \partial_j(\nabla \varphi ^T (V_n-V_m)) \\
		&+  \underset{i}{\sum}(\partial_i p \nabla \varphi ^T(V_n - V_m) + p \partial_i \nabla\varphi^T(V_n - V_m)) \\
		&+ p\nabla\varphi^T(V_n - V_m)  dx + \int_{\Omega} (y- \bar{y})\nabla\varphi^T(V_n-V_m)dx\\
		=& 
		\Big(\underset{i,j}{\sum}\Vert a_{i,j}\Vert_{L^\infty(\Omega)} + \underset{j}{\sum} \Vert d_j \Vert_{L^\infty(\Omega)} + \Vert b \Vert_{L^\infty(\Omega)}\Big)\\
		&\cdot\int_{\Omega}\underset{i,j}{\sum}\Big(\partial_i p \big((\partial_j\nabla \varphi)^T (V_n-V_m) + \nabla \varphi^T \partial_j(V_n-V_m)\big)\Big) \\
		&+ p\nabla\varphi^T(V_n - V_m) dx + \int_{\Omega} (y- \bar{y})\nabla\varphi^T(V_n-V_m)dx
		\end{align*}
		\begin{align*}
		\leq&
		\Big(\underset{i,j}{\sum}\Vert a_{i,j}\Vert_{L^\infty(\Omega)} + \underset{j}{\sum} \Vert d_j \Vert_{L^\infty(\Omega)} + \Vert b \Vert_{L^\infty(\Omega)}\Big)\cdot\Vert p\Vert_{H^1_0(\Omega)}\\
		&\cdot C \cdot
		\Big(\underset{i,j}{\sum}\big(
		\Vert(\partial_j\nabla \varphi)^T (V_n-V_m) \Vert_{L^1(\Omega)} + \Vert \nabla \varphi^T \partial_j(V_n-V_m)\Vert_{L^1(\Omega)}\big) \\ &\hspace{.5cm}+ \Vert \nabla\varphi^T(V_n - V_m) \Vert_{L^1(\Omega)}\Big) \\
		&+
		C\cdot\Vert \varphi \Vert_{H^1(\Omega)}\cdot\Vert y-\bar{y}\Vert_{L^2(\Omega)}\cdot\Vert V_n - V_m\Vert_{H^1_0(\Omega, \mathbb{R}^n)}\\
		\leq&
		C\cdot\Big(\underset{i,j}{\sum}\Vert a_{i,j}\Vert_{L^\infty(\Omega)} + \underset{j}{\sum} \Vert d_j \Vert_{L^\infty(\Omega)} + \Vert b \Vert_{L^\infty(\Omega)}\Big)\cdot\Vert p\Vert_{H^1_0(\Omega)} \\
		&\cdot \Vert \varphi\Vert_{H^1(\Omega)}\cdot 9\cdot\Vert V_n - V_m \Vert_{H^1_0(\Omega,\mathbb{R}^n)} \\ 
		&+
		C\cdot\Vert \varphi \Vert_{H^1(\Omega)}\cdot\Vert y-\bar{y}\Vert_{L^2(\Omega)}\cdot\Vert V_n - V_m\Vert_{H^1_0(\Omega, \mathbb{R}^n)} \rightarrow 0 \quad \text{for } m,n \rightarrow \infty.
		\end{align*}
		Here, we use integration by parts, Gauss' Theorem, $p_{\vert \partial \Omega} = 0$, $(V_n - V_m)_{\vert\partial\Omega} = 0$  and $L^2(\Omega) \hookrightarrow L^1(\Omega)$ with constant $C>0$ as in \cref{EmbeddingLp}. Thus,  $(\tilde{T}_1(V_n))_{n\in\mathbb{N}}$ forms a Cauchy sequence and, therefore, gives a value for the continuous extension of $\tilde{T}_1$ for the limit of $V_n$ in $H^1_0(\Omega, \mathbb{R}^n)$. For $T_2$ we use the same techniques, leading to
		\begin{align*}
		&\hspace{-.3cm}\vert \tilde{T}_2(V_n) - \tilde{T}_2(V_m)\vert\\ =&
		\vert -a(y,p\cdot \textup{div}(V_n - V_m)) + (f, p\cdot \textup{div}(V_n - V_m))_{L^2(\Omega)}\vert \\
		\leq & 
		\Big(\underset{i,j}{\sum}\Vert a_{i,j}\Vert_{L^\infty(\Omega)} + \underset{j}{\sum} \Vert d_j \Vert_{L^\infty(\Omega)} + \Vert b \Vert_{L^\infty(\Omega)}\Big) \\
		&\cdot13\cdot C\cdot\Vert y \Vert_{H^1_0(\Omega)} \cdot\Vert p\Vert_{H^1_0(\Omega)} \cdot \Vert V_n - V_m\Vert_{H^1_0(\Omega, \mathbb{R}^n)} \\
		&+ C\cdot \Vert f \Vert_{L^2(\Omega)}\cdot\Vert p\Vert_{H^1_0(\Omega)} \cdot \Vert V_n - V_m\Vert_{H^1_0(\Omega, \mathbb{R}^n)} \rightarrow 0 \quad \text{for } m,n \rightarrow \infty.
		\end{align*}
		With these convergences $T_1, T_2$ converge to the continuously extended limit objects, which we from now on denote by $T_1, T_2$, for all $V\in H^1_0(\Omega, \mathbb{R}^n)$. 
		Next, we simplify the sum of these two limiting objects. Let $V\in C^\infty_0(\Omega, \mathbb{R}^n)$. Then
		\begin{align}\label{FinalIdent}
		\begin{split}
		& \hspace{-.3cm}T_1(V) + T_2(V) \\= & \;a(p, \nabla\varphi^TV) + (y-\bar{y}, \nabla \varphi ^TV)_{L^2(\Omega)} \\ &
		- a(y,p\cdot \textup{div}(V)) + (f, p\cdot \textup{div}(V))_{L^2(\Omega)} \\
		=& a_{\Omega\setminus A}(p, \nabla\varphi^TV) + (y-\bar{y}, \nabla \varphi ^TV)_{L^2(\Omega\setminus A)} 
		\\&+ a_A(p, \nabla\varphi^TV) + (y-\bar{y}, \nabla \varphi ^TV)_{L^2(A)} 
		+ (\lambda, p\cdot \textup{div}(V))_{L^2(\Omega)}	\\
		=&	(\varphi-\bar{y}, \nabla \varphi ^TV)_{L^2(A)},
		\end{split} 
		\end{align}
		where we use the definition of $p$, complementary slackness of $\lambda\in L^2(\Omega)$, test function properties of $\nabla\varphi^TV$ and $p\cdot \textup{div}(V)$, the state and adjoint equations. We apply again a continuity argument to gain this identity for all $V\in H^1_0(\Omega, \mathbb{R}^n)$. We see that the limit object in (\ref{FinalIdent}) is exactly the missing term in the limit of the shape derivatives $D\mathcal{J}_{\gamma,c}(\Omega)[V]$ (cf.~(\ref{UnreguShapeDeriv})).
	\end{proof}
	
	\begin{remark}
		Theorem~\ref{MainTheorem2} and \cref{MainTheorem3} are also valid when $f\in H^1(\Omega)$ or $\varphi\in H^2(\Omega)$ depend explicitly on the shape $\Omega$ with shape derivatives $f', \varphi'\in H^1_0(\Omega)$. Then the shape derivatives need to be modified accordingly by replacing terms including $\nabla f ^T V$ and $\nabla \varphi^TV$ by $\nabla f ^T V + f'$ and $\nabla \varphi^TV + \varphi'$. 
			Further, Theorem~\ref{MainTheorem2} and \cref{MainTheorem3} remain valid for piecewise constant $f\in L^\infty(\Omega)$ depending on the shape $\Omega$ by adjusting the proofs  applying integral splitting techniques as found in \cite[Remark 4.21, Thrm. 4.23]{welker2016efficient}.
	\end{remark}
	
	
	
	\begin{remark}
		It is common knowledge that by pushing the obstacle $\varphi$ to infinity, i.e., $\varphi(x) \uparrow \infty$ for all $x \in \Omega$, the state equation representing the variational inequality (\ref{PDE}) becomes a regular elliptic PDE in weak formulation
		\begin{align*}
		a(y,v) = (f,v)_{L^2(\Omega)} \quad \forall v\in H^1_0(\Omega)
		\end{align*}
		due to (\ref{VI_conditions}). This means that we encounter shape optimization problems with elliptic PDE constraints.
		Formula (\ref{UnreguShapeDeriv}) remains valid by applying $A=\varnothing$, giving a shape derivative for a general elliptic problem.
	\end{remark}
	
	\begin{remark}
		The limiting object \cref{UnreguShapeDeriv} is in general not the shape derivative of the unregularized problem. It can be regarded as part of the limit system arising during convergence of the optimality systems of the fully regularized problem. Finding a framework in shape optimization to describe the type of this limiting object will be part of further research and is beyond the scope of this article. For readers interested in a treatise on limiting systems of optimality systems in non-smooth optimal control we recommend \cite{christof2017optimal}.
	\end{remark}
	
	\begin{remark}\label{RemarkNoShapeDer}
		The limiting objects of the convergence results for adjoint variables (cf.~\cref{Main_theorem_1}) and shape derivatives (cf.~\cref{MainTheorem2}) can be put into relation by conditions resembling C-stationarity, e.g., as found in \cite[Definition~4.1.]{hintermuller2009mathematical}.
		\\
		Using our terminology, it is necessary for C-stationarity conditions to hold that a $\xi \in H^{-1}(\Omega)$ exists such that the adjoint equation can be formulated in the form
		\begin{align}\label{adj_cStatioForm}
		a(p,v) + \langle\xi,v\rangle = -((y - \bar{y}),v)_{L^2(\Omega)}.
		\end{align}
		We can define such a $\xi\in H^{-1}(\Omega)$ by emulating the definition of $p$ in (\ref{adjoint_unregu}), including enforcement of the Dirichlet condition $p=0$ on $\partial A$ with Nitsche's method using boundary terms (cf. \cite{juntunen2009nitsche}).
		The state equation, corresponding complementarity conditions, and the design equation, which in our setting can be viewed as the shape derivative identity (\ref{shape_deriv}), hold in analogy to the cited definition of C-stationarity.
		The remaining conditions
		\begin{align}
		\langle \xi, p\rangle \geq 0 \quad \text{and } \quad p = 0 \;\;a.e. \;\; \text{in } \{\xi > 0\},
		\end{align}
		by the definitions of $\xi$ and $p$, are satisfied as well. It is worth mentioning that---to knowledge of the authors---no type of C-stationarity-like conditions for optimality of VI constrained shape optimization problems have been investigated or defined before. By defining C-stationarity in this context, as outlined above, we can sum up the theorems by stating that the solutions of the regularized equations converge to a C-stationary system.
	\end{remark}

	\section{Algorithmic aspects and numerical investigations}
	\label{section_Algorithmic}
	In this section, we put the theoretical treatise highlighted in the previous section into numerical practice on domains in $\R^2$. We employ a steepest descent algorithm with backtracking linesearch in order to perform the optimization procedures with various regularized as well as unregularized versions of the specialized variational inequality (see (\ref{NumStateEq})). Also, we propose a way to incorporate the unregularized approach in an algorithm and compare it to the different regularizations.
	
	For convenience, we specialize the more general constraint (\ref{PDE}) to a Laplacian version:
	
	\begin{align}\label{NumProblem}
	\begin{split}
	\min_{\Gi}\;\frac{1}{2}\int_{\Omega} \left|y - \bar{y}\right|^2\; dx+\nu \int_{\Gi}& 1\; ds 
	\end{split}	
	\\
	\begin{split}\label{NumStateEq}
	\text{s.t.} \quad \int_{\Omega}\nabla y^T \nabla v \;dx + \langle \lambda, v \rangle &= \int_{\Omega} fv\;dx	\quad \forall v \in H^1_0(\Omega)\\
	\lambda &\geq 0 \quad \text{in } \Omega \\
	y  &\leq \varphi \quad \text{in } \Omega \\
	\lambda(y-\varphi) &= 0 \quad \text{in } \Omega 
	\end{split}
	\end{align}
	We use $\nu = 10^{-5}$ for all computations in this section.
	As the right-hand side of the state equation in \cref{NumProblem} we choose following piecewise constant function $f\in L^\infty(\Omega)$ defined by
	\begin{align}\label{fRightHandSide}
	f(x) = \begin{cases}
	-10 \quad &\text{for } x\in \Omega_{\text{out}} \\
	100 \quad &\text{for } x\in \Omega_{\text{in}}
	\end{cases}.
	\end{align}
	
	For calculations of the smoothed state and adjoint we have to specify $\max_\gamma$ satisfying \cref{AssumptionsOnMax}. For demonstrative purpose, we choose a similar smoothing procedure as in \cite[Section 2]{ItoKunisch_VI}:
	\begin{align}\label{SmoothingMaxExample}
	\begin{split}
	\text{max}_\gamma(x) = \begin{cases}
	\max(0,x) \quad &\text{for } x\in \mathbb{R}\setminus [-\frac{1}{\gamma}, \frac{1}{\gamma}] \\
	\frac{\gamma}{4}x^2 + \frac{1}{2}x + \frac{1}{4\gamma} \quad &\text{else}
	\end{cases}
	\end{split}		
	\end{align}
	A different, more regular smoothing is, e.g., given in \cite[(1.10)]{schiela2013convergence}. Both smoothing techniques mentioned satisfy \cref{AssumptionsOnMax}. 
	For the sake of completeness, we also give the first derivative formula
	\begin{align}
	\textup{sign}_\gamma(x) = \begin{cases}
	0 \quad &\text{for } x\in(-\infty, -\frac{1}{\gamma}) \\
	\frac{\gamma}{2}x + \frac{1}{2} \quad &\text{for } x\in[-\frac{1}{\gamma}, \frac{1}{\gamma}] \\
	1 \quad &\text{for } x\in (\frac{1}{\gamma}, \infty)
	\end{cases}.
	\end{align}
	%

	In this setting, the shape derivative (\ref{UnreguShapeDeriv}) simplifies to 
	\begin{equation}
	\begin{split}
	&\hspace{-.7cm}D\mathcal{J}(\Omega)[V] \\= \int_{\Omega}&-(y-\bar{y})\nabla \bar{y}^TV - \nabla y^T (\nabla V^T + \nabla V)\nabla p  \\ &
	+ \textup{div}(V)\Big(\frac{1}{2}(y-\bar{y})^2 + \nabla y^T \nabla p - fp\Big) \;dx + \int_{A}(\varphi - \bar{y})\nabla \varphi^TV\;dx
	\end{split}
	\end{equation}
	and analogously the shape derivative for the fully regularized equation in \cref{shape_deriv}. Notice that the shape derivative of the perimeter regularization is also included in our computations (cf.~\cref{Remark_shapederiv}).
	
	In the following numerical experiments, we consider two different obstacles:
	\begin{align}
	\varphi_1(x) = 0.5 \quad \text{and}\quad \varphi_2(x) = 5 e^{-x_1 -1}.
	\end{align}
	
	The calculations are performed with Python using the finite element package FEniCS. For detailed informations on FEniCS, we refer to \cite{AlnaesBlechta2015a} and \cite{LoggMardalEtAl2012a}. As initial shape we choose a centered circle with radius $0.15$, illustrated in \cref{ShapeMorphFigure}. The computational grid of the initial shape, which is embedded in the hold-all-domain $(0,1)^2\subset \mathbb{R}^2$, consists of $2\,184$ vertices with $4\,206$ cells, having a maximum cell diameter of 0.0359 and a minimum cell diameter of 0.018. The algorithm employed for the shape optimization is summarized in algorithm~\ref{MainAlgo}. 
	In the following, we describe the algorithm and the chosen parameters in detail.
	
	The target data $\bar{y}\in L^2(\Omega)$ is computed by using the mesh of the target interface to calculate a corresponding state solution of \cref{NumStateEq} by the semi-smooth Newton method proposed in \cite{ItoKunisch_VI}. These are visualized in \cref{TargetStatesFigure} for both obstacles $\varphi_1$ and $\varphi_2$. We apply the same method for calculating state variables $y$ in the unregularized optimization approach.
	
	\begin{figure}
		\vspace{-1cm}
		\hspace{-1cm}
		\includegraphics[scale=.27]{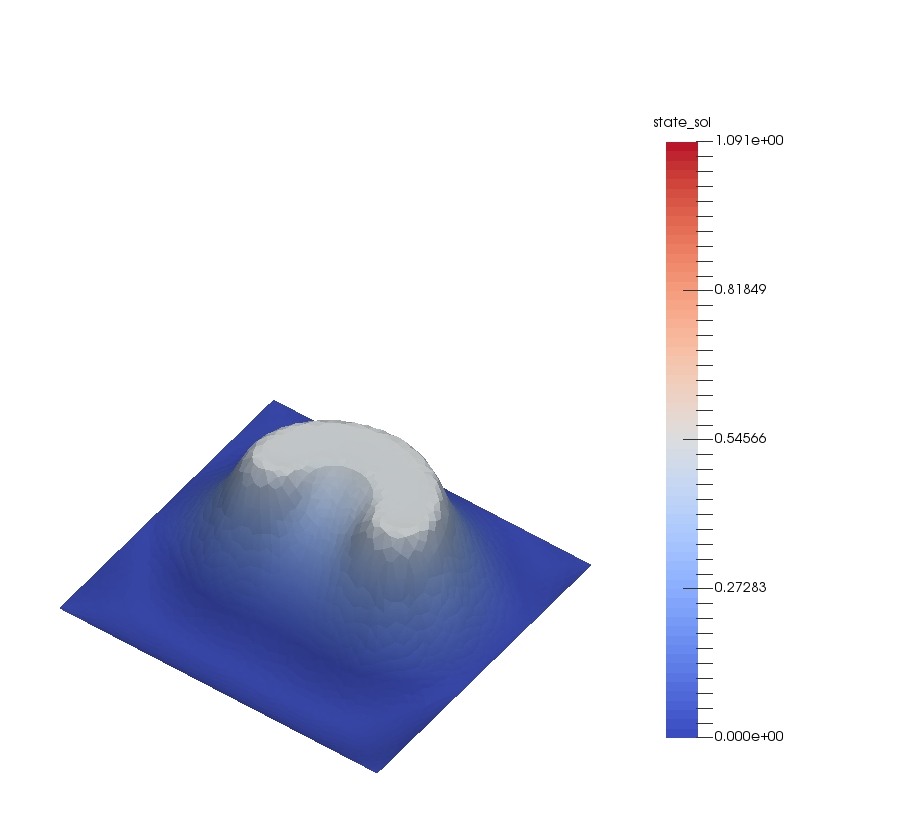}
		\hspace{-1cm}
		\includegraphics[scale=.27]{state_exp_me}
		\vspace{-.2cm}	
		\caption{Solutions $\bar{y}$ to the VI in the target shape. On the left: $\varphi_1 = 0.5$. On the right: $\varphi_2 = 5 e^{-x_1 -1}$.} 
		\label{TargetStatesFigure}
	\end{figure}
	
	For the regularized and smoothed states $y_{\gamma,c}$ and $ y_c$ we use a Newton- and semi-smooth Newton method provided by the FEniCS package in order to solve the linear systems assembled by using first order polynomials on the computational grids. All state calculations in our routines are performed with a stopping criterion of $\varepsilon_{\text{state}} = 3.e-4$ for the error norms.
	In light of \cref{FeasabilityKunish} we choose $\bar{\lambda} = \textup{max}\{0, f + \Delta\varphi\}$, which is possible due to sufficient regularity of $\varphi_1$ and $\varphi_2$. 
	
	To ensure assumptions of \cref{Main_theorem_1}, \cref{MainTheorem2} and \cref{MainTheorem3}, it is necessary to fulfill
	\begin{equation}\label{NumConvSign}
	\Vert\textup{sign}_\gamma(\bar{\lambda} + c\cdot(y_{\gamma, c} - \varphi)) - \textup{sign}(\bar{\lambda} + c\cdot(y_{c} - \varphi))\Vert_{L^1(\Omega)} \rightarrow 0 \quad \text{for } \gamma \rightarrow \infty. 
	\end{equation}
	
	We calculate the corresponding norm using various $c >0$ and both, $\varphi_1$ and $\varphi_2$, on refined meshes having $212\,642$ vertices, $423\,682$ cells and maximum and minimum cell diameter of $0.0038$ and $0.0015$, respectively.
	An example convergence plot can be found in \cref{SignConvFigure}.
	\begin{figure}
		\centering
		\includegraphics[scale=.4]{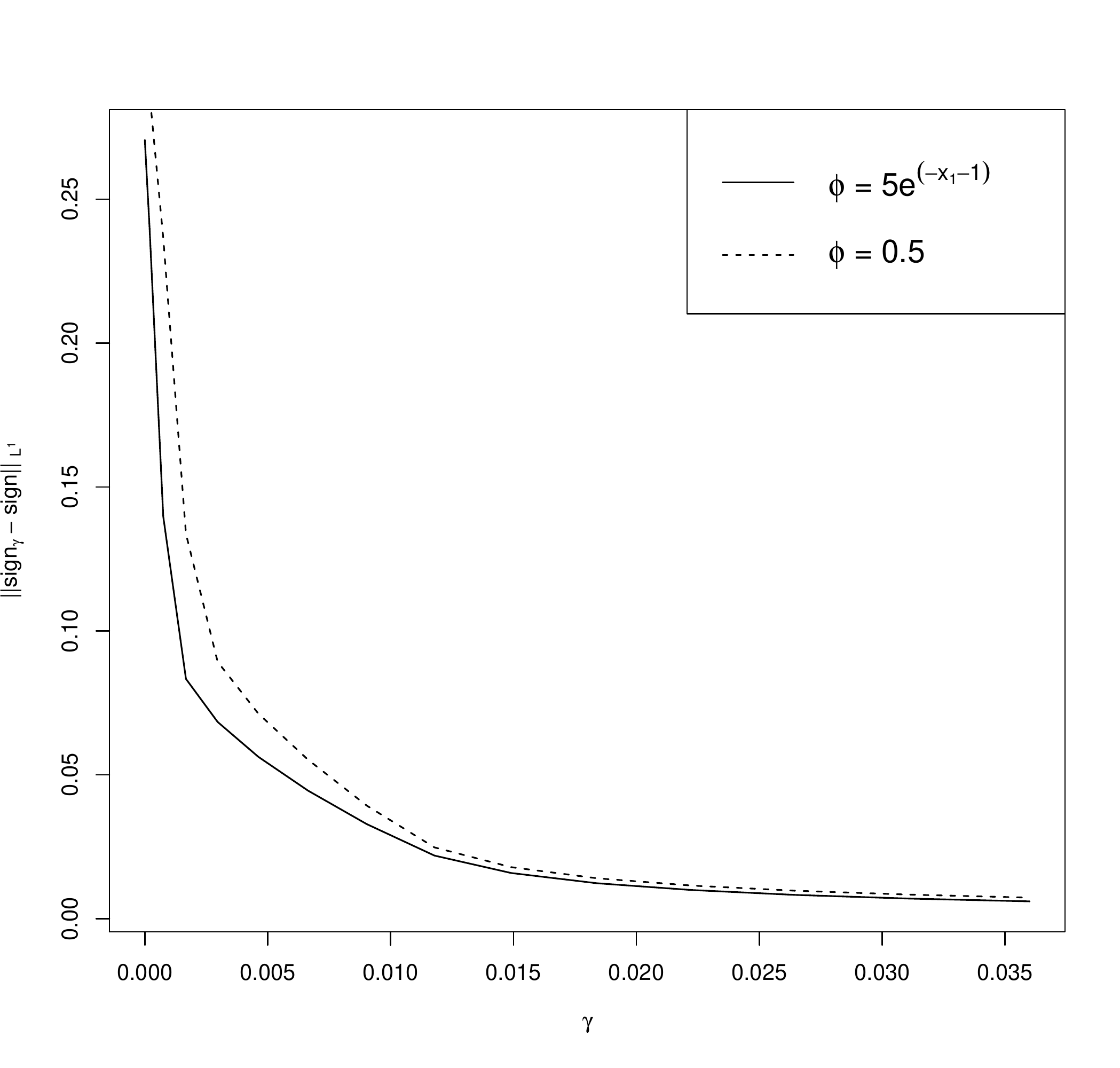}
		\vspace{-.5cm}
		\caption{Convergence plots for $	\Vert\textup{sign}_\gamma(\bar{\lambda} + c\cdot(y_{\gamma, c} - \varphi)) - \textup{sign}(\bar{\lambda} + c\cdot(y_{c} - \varphi))\Vert_{L^1(\Omega)}$ as a function of $\gamma$.}
		\label{SignConvFigure}
	\end{figure}
	\begin{figure}
		\includegraphics[scale=.1282]{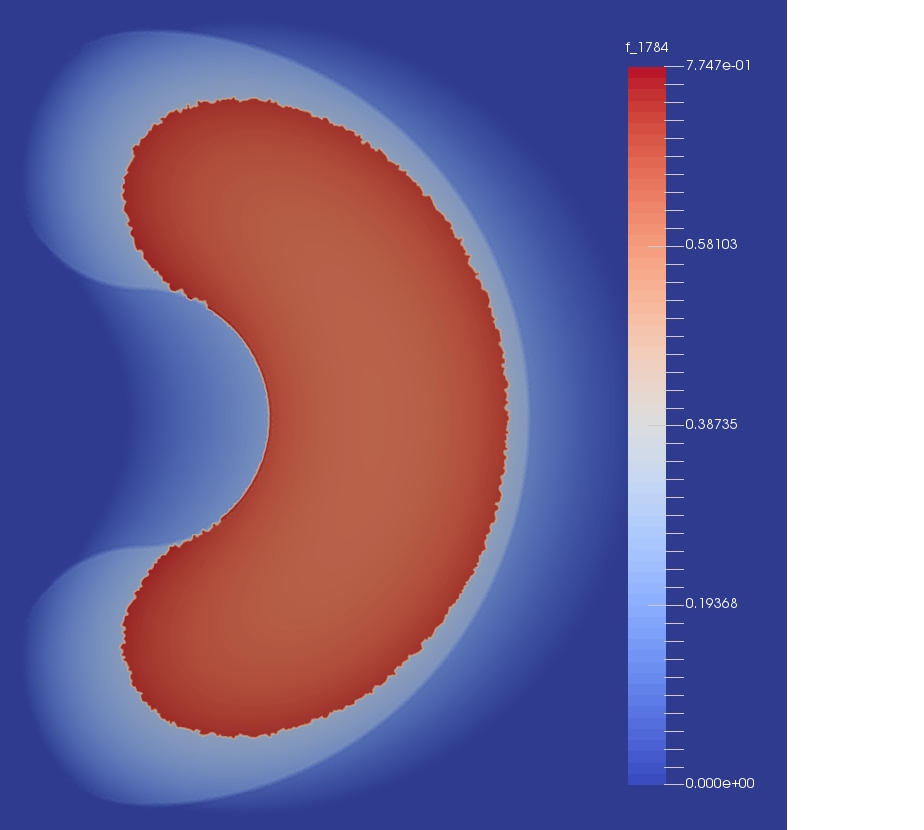}
		\includegraphics[scale=.1282]{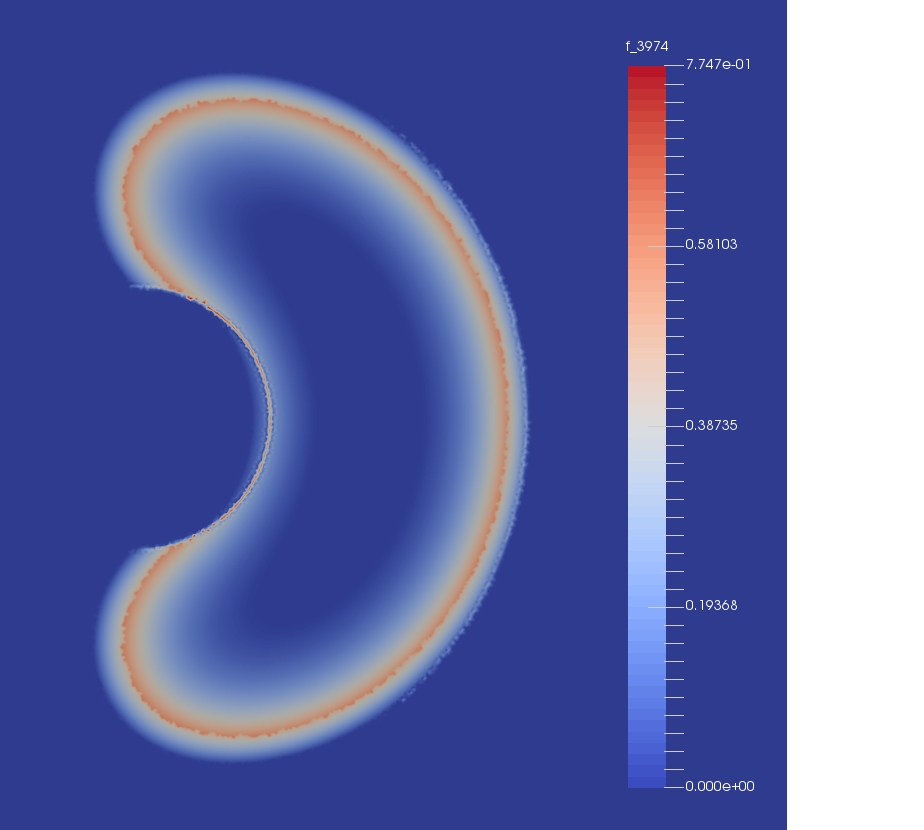} 
		\includegraphics[scale=.1282]{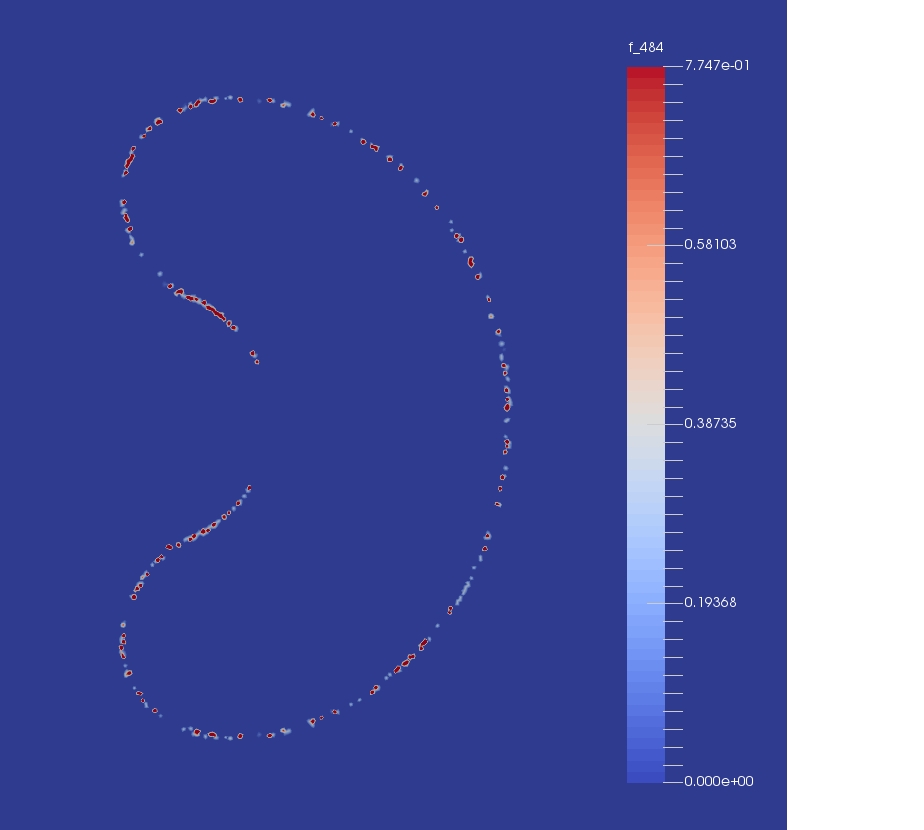}
		\caption{Graphs of $	\textup{sign}_\gamma\big(\bar{\lambda} + c\cdot(y_{\gamma, c} - \varphi)\big) - \textup{sign}\big(\bar{\lambda} + c\cdot(y_{c} - \varphi)\big)$ as functions of $x\in \Omega$ calculated on the refined target mesh with $c = 1\,000$. From left to right: $\gamma = 0.00075$, $\gamma = 0.009$ and $\gamma = 10$.}
		\label{SignDifferenceFigure}
	\end{figure} 
	We want to point out that as $\gamma \rightarrow \infty$, the norm in (\ref{NumConvSign}) converges to an $\varepsilon >0$ which is close to $0$. This is due to numerical errors resulting from the state equation, since their solution determines the active set, which is needed to calculate the values of $\textup{sign}$ and $\textup{sign}_\gamma$. The functions, whose $L^1$-norms are of interest, are illustrated in \cref{SignDifferenceFigure} on a refined mesh. Furthermore, we observe that these functions, and hence the errors go to $0$ for ever finer grid widths and more precisely calculated states $y_{\gamma,c}, y$. This is supported by a study successively evaluating the mentioned $L^1$-norms in the circular start shape, as depicted in \cref{ShapeMorphFigure}, on meshes generated by adaptive refinement at the boundary of the active set for large $\gamma,c >0$. The results are seen in \cref{SignDifferenceStudyPlot}.
	
	\begin{figure}
		\centering
		\includegraphics[scale=.4]{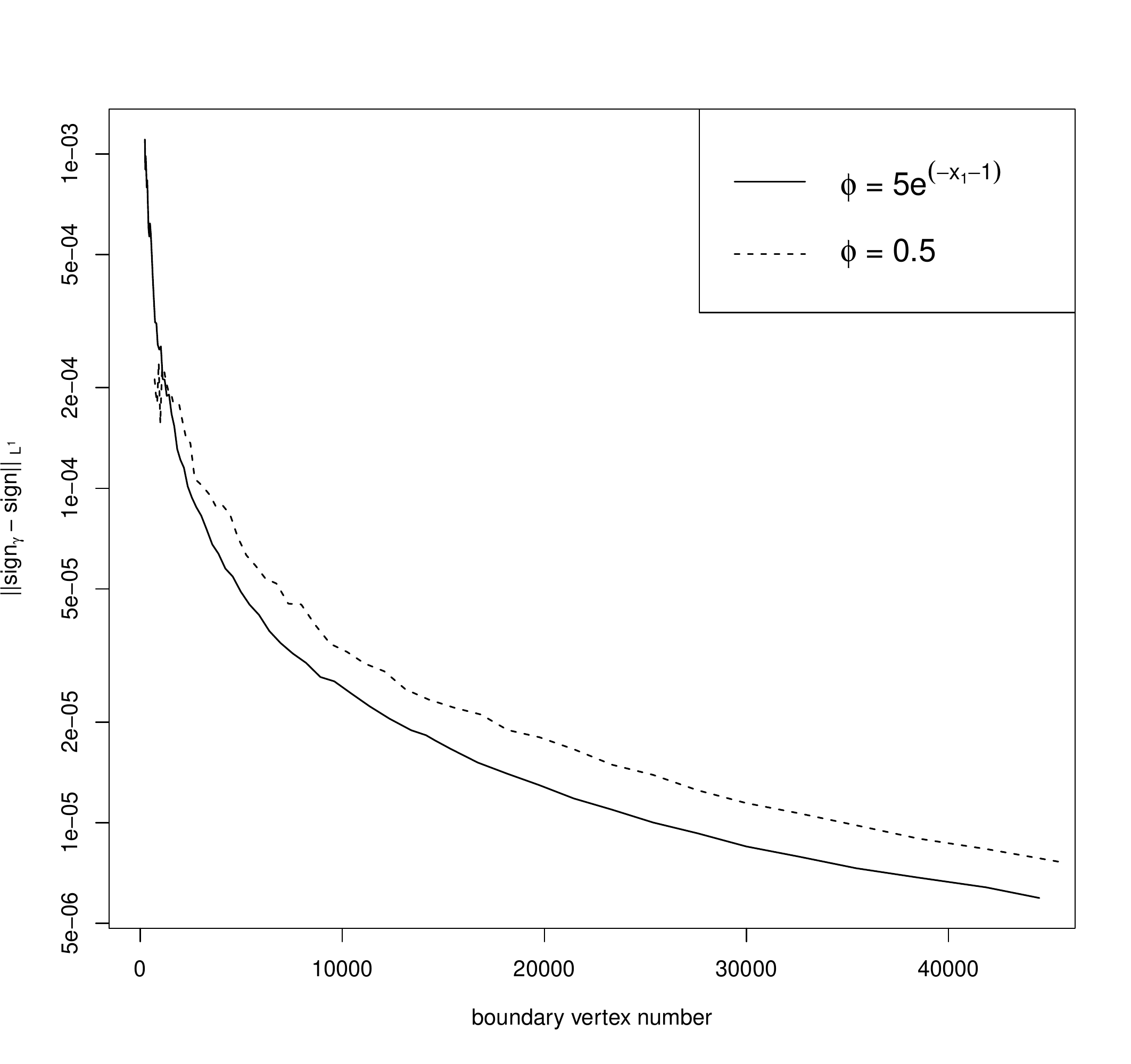}
		\caption{Graphs of $	\Vert\textup{sign}_\gamma\big(\bar{\lambda} + c\cdot(y_{\gamma, c} - \varphi)\big) - \textup{sign}\big(\bar{\lambda} + c\cdot(y_{c} - \varphi)\big)\Vert_{L^1(\Omega)}$ as functions the number of mesh vertices at the boundary of $A$ calculated on refined meshes with $c = 10^5$, $\gamma = 10^8$.}
		\label{SignDifferenceStudyPlot}
	\end{figure}
	
	The adjoints $p_{\gamma,c}$ and $p_c$ are calculated by solving \cref{SmoothedAdjoint} and \cref{adjoint_limitproblem} with first order elements by using the FEniCS standard linear algebra back end solver PETSc. 
	
	Calculating the limit $p$ of the adjoints $p_{\gamma,c}$ as in \cref{adjoint_unregu} and \cref{adjoint_subproblem} are performed in several steps. First, a linear system corresponding to 
	\begin{align}\label{adjointLinearSystemNum}
	\begin{split}
	-\Delta p  =& -(y-\bar{y}) \quad \text{in } \Omega \\
	p =&\; 0 \qquad \hspace{.9cm} \text{on } \partial \Omega 
	\end{split}
	\end{align}
	is assembled without incorporation of information from the active set $A$. Afterwards, the vertex indices corresponding to the points in the active set $A = \{x\in \Omega \;\vert\; y - \varphi \geq 0\}$ are collected by checking the condition 
	\begin{align}\label{ActiveSetNumRule}
	y(x) - \varphi(x) \geq -\varepsilon_{\text{adj}}
	\end{align}
	for some error bound $\varepsilon_{\text{adj}} >0$. The error bound $\varepsilon_{\text{adj}}$ is incorporated since $y$ is feasibly approximated by $y_i$ with the semi-smooth Newton method from \cite{ItoKunisch_VI}, i.e., $y_i \leq \varphi$ for all $i\in \mathbb{N}$. After this, the collected vertex indices are used to incorporate the Dirichlet boundary conditions $p = 0 \quad \text{in } A$ into the linear system corresponding to (\ref{adjointLinearSystemNum}).
	To solve the resulting system, we use the same procedures as to solve for $p_{\gamma,c}, p_c$, i.e.m the standard PETSc back end conjugate gradient solver. An exemplary solution $p$ of the unregularized adjoint equation is illustrated in \cref{AdjointPicFigure}. We want to point out that the active set and consequently the zero level set resulting from the Dirichlet conditions can be observed in \cref{AdjointPicFigure}.
	
	\begin{figure}
		\begin{subfigure}{.5\textwidth}
			\hspace{-1cm}	\includegraphics[scale=0.22]{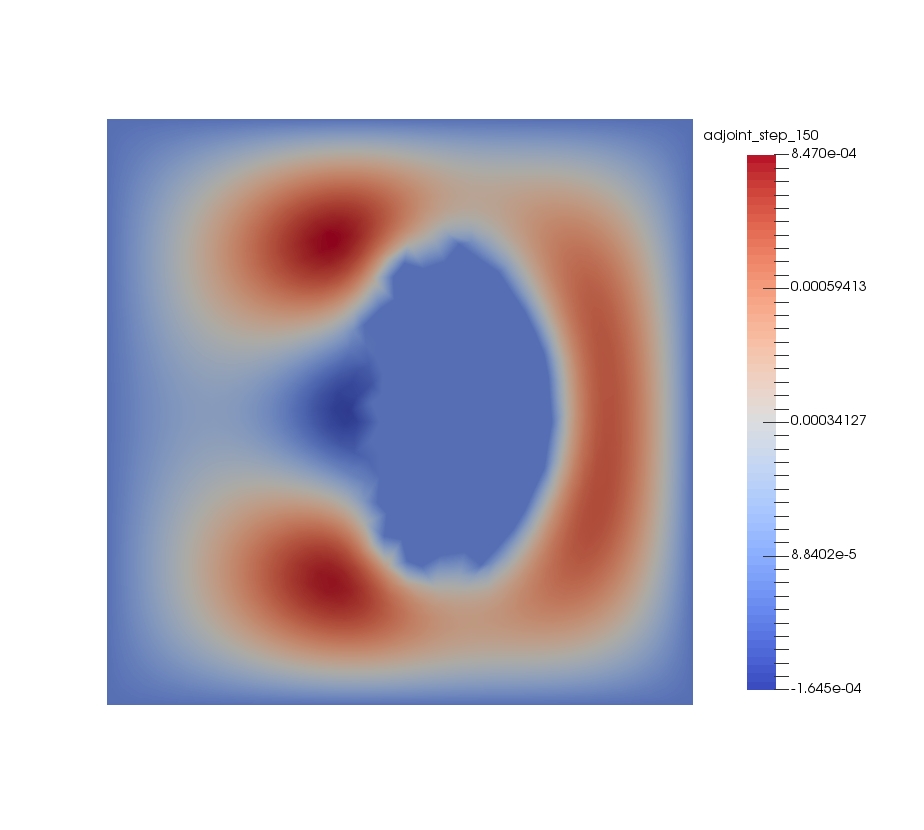}
		\end{subfigure}
		\begin{subfigure}{.45\textwidth}
			\hspace{-.5cm}\centering
			\vspace*{-.3cm}
			\includegraphics[scale=0.25]{adjoint_step_150_3d}
		\end{subfigure}
		\vspace{-.8cm}
		\caption{Solution of the adjoint $p$ in step $150$ for the unregularized equation (\ref{adjoint_unregu}) and $\varepsilon_{\text{adj}}=10^{-9}$. }
		\label{AdjointPicFigure}
	\end{figure}
	
	To calculate gradients $U\in H^1_0(\Omega, \mathbb{R}^2)$ used in the steepest descent method for solving \cref{NumProblem}, we use a Steklov-Poincar\'{e} metric induced by the linear elasticity equation, as proposed in \cite{schulz2015Steklov}. 
	In particular, we assemble the shape derivatives given in \cref{MainTheorem2} and \cref{MainTheorem3} as the right-hand side of the linear elasticity equation 
	\begin{align}\label{LinElas}
	\begin{split}
	\int_{\Omega}\sigma(U):\epsilon(V)\;dx &= DJ(\Omega)[V] \qquad \forall V\in H^1_0(\Omega, \mathbb{R}^2) \\
	\sigma(U):&= \lambda_{\text{elas}}\text{Tr}(U)I + 2\mu_{\text{elas}}\epsilon(U) \\
	\epsilon(U):&= \frac{1}{2}(\nabla U^T + \nabla U) \\
	\epsilon(V):&= \frac{1}{2}(\nabla V^T + \nabla V)
	\end{split}	
	\end{align}
	with the so called Lam\'{e}-parameters $\lambda_{\text{elas}}$ and $\mu_{\text{elas}}$. Here, we choose $\lambda_{\text{elas}} = 0$ and $\mu_{\text{elas}}$ as the solution of the Poisson problem
	\begin{equation}
	\begin{split}
	-\Delta \mu_{\text{elas}} &= 0 \qquad \;\;\; \text{in } \Omega \\
	\mu_{\text{elas}} &= \mu_{\text{max}} \quad \text{on } \Gamma \\
	\mu_{\text{elas}} &= \mu_{\text{min}} \quad \text{on } \partial\Omega
	\end{split}
	\end{equation}
	for $\mu_{\text{max}}, \mu_{\text{min}} > 0$. As a physical interpretation, this enables to control stiffness of the grid by choosing $\mu_{\text{max}}$ and $\mu_{\text{min}}$ in order to influence $\mu_{\text{elas}}$ acting as a coefficient function in the linear elasticity equation (\ref{LinElas}). Thus, larger values of $\mu_{\text{max}}$ lead to more stiffness at the interface $\Gamma$ and larger values of $\mu_{\text{min}}$ to more stiffness at the boundary $\partial\Omega$ of the hold-all domain $\Omega$. For our calculations, we choose $\mu_{\text{min}} = 0$ and $\mu_{\text{max}} = 25$ for $\varphi_1$ and $\mu_{\text{max}} = 55$ for $\varphi_2$. It is important to notice that we set all right-hand side values of (\ref{LinElas}) which do not have a neighboring vertex on the interface to $0$. For a more detailed discussion of this we refer to \cite{schulz2015Steklov}.
	
	\begin{figure}
		\centering
		\includegraphics[scale=1.]{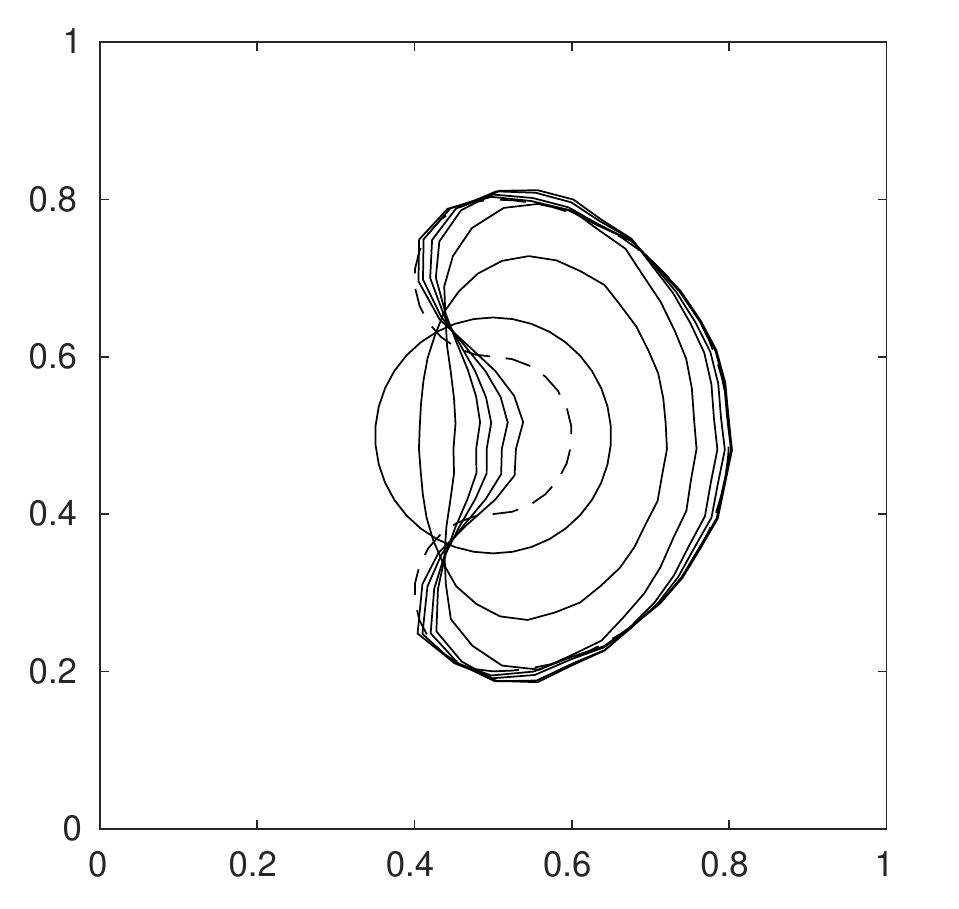}
		\caption{Interfaces of steps 0, 50, 150, 320, 450, 750, 1200 of the unregularized optimization procedure using $\varepsilon_{\text{adj}}=10^{-9}$ and $\varphi_2 = 5 e^{-x_1 -1}$. The target interface is represented with dotted lines, the start interface is the centered circle.}
		\label{ShapeMorphFigure}
	\end{figure}
	
	To complete the description of our optimization we shortly explain the linesearch we will employ in our numerical calculations. We use a simple backtracking linesearch with sufficient descent criterion, where $U_k$ denotes the shape derivative calculated at the corresponding interface in $\Omega_k$ in step number $k$, $\mathcal{T}_{\tilde{U}}(\Omega_k) := \{ y\in\mathbb{R}^2: y = x + \tilde{U}(x) \text{ for some } x\in\Omega_k\}$ the linearized vector transport by $\tilde{U}$ and $y_{\tilde{U}}$ the state solution in $\mathcal{T}_{\tilde{U}}(\Omega_k)$.
	
	\LinesNumbered
	\begin{algorithm2e}
		\caption{Backtracking linesearch.}
		\label{BacktrackingLinesearch}
		$\tilde{U} \gets U_k$ \\
		\While{$\mathcal{J}\big(y_{\tilde{U}}, \mathcal{T}_{\tilde{U}}(\Omega_k)\big) > 0.995\cdot \mathcal{J}\big(y_k, \Omega_k\big)$}{
			$\tilde{U} \gets 0.5\cdot\tilde{U}$
		}
		$\Omega_{k+1} \gets \mathcal{T}_{\tilde{U}}(\Omega_k)$
	\end{algorithm2e}
	
	We summarize our approach in algorithm~\ref{MainAlgo} for the unregularized procedures. The regularized and smoothed procedures work analogously by modifying the state, adjoint and shape derivative equations. The calculations of $p_{\gamma,c}, p_c$ are straightforward and need not the additional steps outlined in before and in algorithm \ref{MainAlgo} for the unregularized $p$. In the design of algorithm~\ref{MainAlgo} a safeguarding technique is employed. This stems from the fact, that the limit of shape derivatives $D\mathcal{J}$ from \cref{UnreguShapeDeriv} is in general not the true shape derivative of the initial problem, see \cref{RemarkNoShapeDer}. Hence an additional testing the convergence criterion for the fully regularized shape derivative $\mathcal{D}\mathcal{J}_{\gamma,c}$ is performed after the convergence by $D\mathcal{J}$. If no convergence is detected by $D\mathcal{J}_{\gamma,c}$, $D\mathcal{J}_{\gamma,c}$ will be used to calculate a further descent direction, as the latter is a true shape derivative by \cref{MainTheorem2}.  Further, the safeguarding acts as a safety when the adjoint limit object $p_k$ is flawed due to erroneous allocation of the active set $A_k$ as discussed with \cref{ActiveSetNumRule} in the beginning of this section. The smoothed model is not prone to this effect, hence acting as a substitute model for further gradient calculations.
	
	In our calculations the safeguard was never activated by not finding a descent direction during the linesearch procedure, indicating that the shape derivative limiting object $D\mathcal{J}$ is acting appropriately for a shape derivative substitute, making the safeguard for this purpose obsolete. Still, the safeguarding is activated at convergence for coarse grids or imprecise calculations of the state $y_k$, indicating a non-neglectable difference in $\Vert D\mathcal{J}_{\gamma,c}\Vert$ and $\Vert D\mathcal{J}\Vert$. This is only due to false allocation of the active set $A_k$, resulting in inaccurate $p_k$ and $\Vert D\mathcal{J}(\Omega_k)\Vert$. For grids with maximum cell diameter $0.01$ or less and  error tolerance $\varepsilon_{\text{state}} < 1.e-7$ for the state calculation, the errors in active set allocation are sufficiently small for $\Vert D\mathcal{J}(\Omega_k)\Vert \approx \Vert D\mathcal{J}_{\gamma,c}(\Omega_k)\Vert$ and the safeguarding to not be activated at convergence in all our examples.
	
	\DontPrintSemicolon
	\LinesNumbered
	
	\begin{algorithm2e}\caption{Shape optimization for unregularized VI constraints with safeguard strategy.}
		\label{MainAlgo}
		\SetAlgoRefName{MainAlgo}
		Set $\Omega_0, \varphi, f, \bar{\lambda}, \bar{y}, \gamma, c$ \; 
		\While{$\Vert D\mathcal{J}(\Omega_k)\Vert > \varepsilon_{\text{\emph{shape}}}$ \textbf{\textrm{or}} $\Vert D\mathcal{J}_{\gamma,c}(\Omega_k)\Vert > \varepsilon_{\text{\emph{shape}}}$}{
			Calculate state $y_k$ with tolerance  $\varepsilon_{\text{state}}$ \;
			Calculate 'adjoint' $p_k$ \;
			\Indp
			Assemble adjoint system (\ref{adjointLinearSystemNum}) neglecting active set \;
			Collect vertex indices of active set by (\ref{ActiveSetNumRule}) \;
			Implement Dirichlet conditions of active set \; 
			Solve modified adjoint linear system \;
			\Indm 
			Calculate $\Vert D\mathcal{J}(\Omega_k)\Vert$ and shape gradient $U_k$\;
			\Indp
			Assemble gradient system (\ref{LinElas})\;
			Set $D\mathcal{J}(\Omega_k)[V] = 0$ on all vertices without support at interface $\Gamma_{\text{int}}$\;
			Solve for gradient $U_k$\;
			\Indm
			Perform backtracking linesearch (algorithm \ref{BacktrackingLinesearch}) to get $\tilde{U}_k$\;
			\textbf{if} linesearch fails to give descent direction $\tilde{U}_k$ \textbf{\textrm{or}} $\Vert D\mathcal{J}(\Omega_k)\Vert \leq \varepsilon_{\text{\emph{shape}}}$ : \;
			\Indp
			Calculate fully regularized state $y_{\gamma,c}$ \;
			Calculate fully regularized adjoint $p_{\gamma,c}$ \;
			Calculate $\Vert D\mathcal{J}_{\gamma,c}(\Omega_k)\Vert$\;
			\textbf{if} $\Vert D\mathcal{J}_{\gamma,c}(\Omega_k)\Vert >\varepsilon_{\text{\emph{shape}}}$ \;
			\Indp
			Calculate fully regularized gradient $U_{\gamma,c}$ by $D\mathcal{J}_{\gamma,c}(\Omega_k)$ \;
			Perform backtracking linesearch (algorithm \ref{BacktrackingLinesearch}) to get $\tilde{U}_{k}$ \;
			\Indm
			\Indm
			$\Omega_{k+1} \gets \mathcal{T}_{\tilde{U}_k}(\Omega_k)$ 		
		}
	\end{algorithm2e}  
	
	
	\begin{figure}
		\includegraphics[scale=0.9]{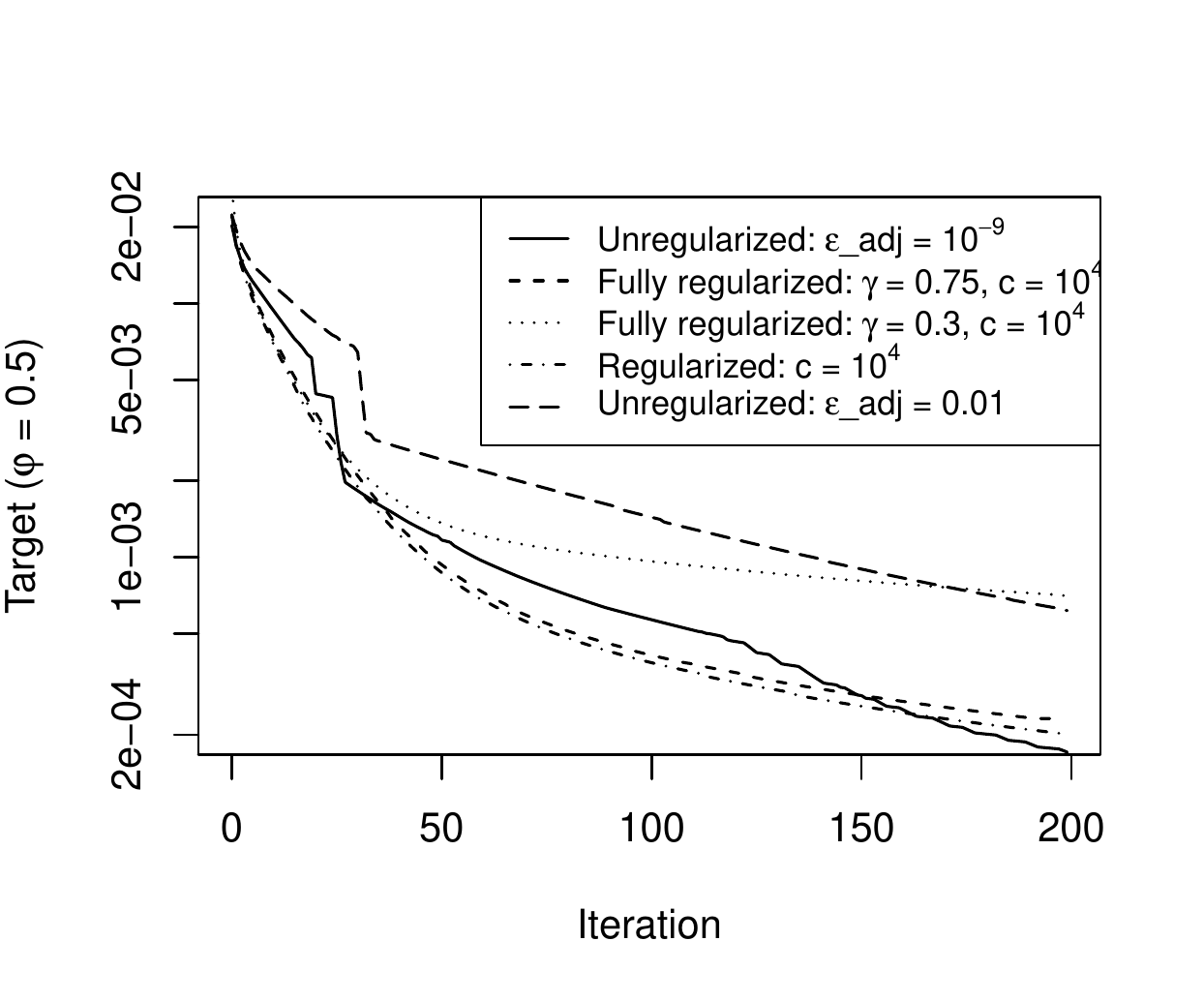}
		\vspace{-.5cm}
		\caption{Convergence plot of target functional $\mathcal{J}$ values for different regularization and unregularized approaches for obstacle $\varphi_1 = 0.5$ using steepest descent}
		\label{ConvPlotconstFigure}
	\end{figure}
	\begin{figure}
		\includegraphics[scale=0.9]{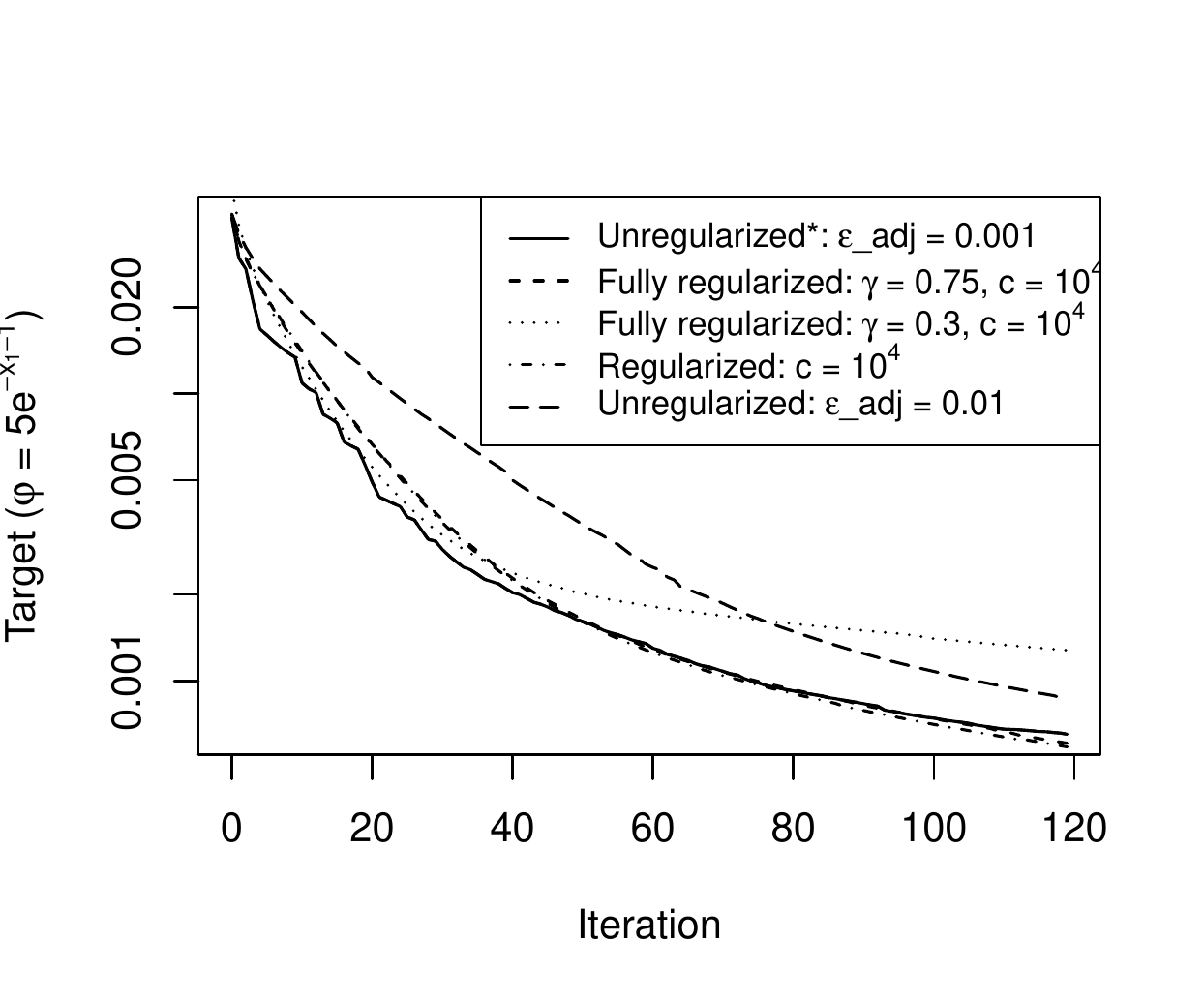}
		\vspace{-.5cm}
		\caption{Convergence plot of target functional $\mathcal{J}$ values for different regularization and unregularized approaches for obstacle $\varphi_2 = 5 e^{-x_1 -1}$. Unregularized* uses a lower tolerance $\varepsilon_{state}= 0.00001$ for the state calculation. Notice that regularized and fully regularized approaches for $\gamma = 0.75, c = 10^4$ are almost indistinguishable.}
		\label{ConvPlotexptFigure}
	\end{figure}
	
	Our findings concerning convergence of the various shape optimization approaches, using the unregularized approach for various $\varphi_{\text{adj}}$, as well as regularized approaches with different parameters $\gamma, c >0$, are displayed in \cref{ConvPlotconstFigure} for $\varphi_1 = 0.5$ and in \cref{ConvPlotexptFigure} for $\varphi_2 = 5 e^{-x_1 -1}$. Morphed shapes arising during the optimization procedure are plotted in \cref{ShapeMorphFigure} for the unregularized approach using $\varepsilon_{\text{adj}} = 10^{-9}$. It can be seen in the plots that there are vanishing difference between approaches using fully regularized calculation with sufficiently large $\gamma$ and $c$, regularized ones with large $c$ and the unregularized one. 
	For smaller regularization parameters $\gamma$ and $c$, the solved state and adjoint equations begin to differ from the original problem and, thus, slow down convergence, and for very small $\gamma$ and $c$ no convergence at all. 
	
	The convergence behavior of the unregularized method strongly depends on the selection of the active set. When the state solution $y$ is not calculated with sufficient precision the numerical errors lead to misclassification of vertex indices. Hence wrong Dirichlet conditions are incorporated in the adjoint system, creating errors in the adjoint. This makes the gradient sensitive to error for smaller $\varepsilon_{\text{adj}}$, as can be seen by the slight roughness of the target graphs in \cref{ConvPlotconstFigure} and \cref{ConvPlotexptFigure} for $\varepsilon_{\text{adj}} = 10^{-9}$ and $\varepsilon_{\text{adj}} = 0.01$. 
	In order to compensate this, the condition for checking active set indices (\ref{ActiveSetNumRule}) can be relaxed by increasing $\varepsilon_{\text{adj}}$. This increases likelihood of correctly classifying the true active indices, while also increasing likelihood of misclassification of inactive indicies. Such a relaxation can lead to errors in the adjoint increasing with $\varepsilon_{\text{adj}}$ and, thus, trading convergence speed for robustness, also visible in \cref{ConvPlotconstFigure} and \cref{ConvPlotexptFigure}. Of course, this gets less feasible for highly oscillatory obstacle $\varphi$ and state $y$, as well as state solves with high tolerance $\varepsilon_{\text{\text{state}}}$.
	
	In order to circumvent this, it is obviously sufficient to decrease error tolerance $\varepsilon_{\text{state}}$ of the state calculation. An exemplary result of this can be seen in \cref{ConvPlotexptFigure} under unregularized*, where we decreased the error tolerance to $\varepsilon_{\text{state}} = 4.e-5$. Nevertheless, additional decrease of $\varepsilon_{\text{state}}$ comes with more computational cost, whereas with increase of $\varepsilon_{\text{adj}}$ the robustness is paid by loss of convergence speed.
	
	It is worth to mention that implementing the unregularized state and adjoint becomes especially numerically exploitable with higher resolution meshes and more strongly binding obstacles $\varphi$, i.e., larger active sets $A$. This is possible by sparse solvers due to the incorporation of Dirichlet conditions on the active set, as we have proposed, or by a fat boundary method as in \cite{maury2001fat}. This is especially advantageous for large systems resulting from fine resolution grids, as exploitability of sparsity and accuracy of our method increase at the same time.
	
	So in contrast to the method proposed in \cite{mPDAS}, where performance slows down for more active obstacle $\varphi$, we do not notice unusual slowdown in performance with the methods proposed in this article, and even offer possibility to actually benefit numerically from more binding obstacle $\varphi$.

	\section{Conclusion}
	Shape optimization for  variational inequalities is more challenging than both, elliptic shape optimization and optimal control for variational inequalities. In this paper, we derive optimality conditions for shape optimization in the context of variational inequalities in the flavor of optimal control problems. Regularized variants are studied and limiting conditions derived. This gives rise to highly efficient optimization algorithms. In the future general investigations of necessary optimality criteria for VI constrained shape optimization like C-stationarity are conceivable. Also large-scale multidimensional computational comparisons of the presented method in comparison to other state-of-the-art methods is of particular interest.
	
	
	\section*{Acknowledgement}
	
	The authors are indebted to Leonhard Frerick (Trier University) for many helpful comments and discussions about functional analytical aspects of convergence.
	Furthermore, the paper profited form remarks of Gerd Wachsmuth on an earlier version.
	This work has been partly supported by the German Research Foundation (DFG) within the priority program ``Non-smooth and \discretionary{Complementarity-}{based}{Complementarity-based} Distributed Parameter Systems: Simulation and Hierarchical Optimization'' 
	SPP 1962/1 and SPP 1962/2 under contract numbers Schu804/15-1 and WE 6629/1-1, by the DFG research training group 2126 Algorithmic Optimization, and by the BMBF (Bundesministerium f\"{u}r Bildung und Forschung) within the collaborative project GIVEN (FKZ: 05M18UTA).

	
	\newpage
	\appendix
	\section{Proof of existence of shape derivatives in \cref{MainTheorem2}}
	\label{Proof_ExistenceShapeDeriv}
	What follows is a proof for the existence of shape derivatives $D\mathcal{J}_{\gamma,c}$ for all $\gamma,c >0$ under the assumptions of \cref{MainTheorem2}.
	\begin{proof}
		
		For the proof of existence $D\mathcal{J}_{\gamma,c}$ for all $\gamma,c >0$, we will employ the so called averaged adjoint approach, as found in \cite[Ch. 7, Thm. 7.1]{sturm}. We roughly follow a proof found in \cite[Ch. 7, Thm. 7.2]{sturm}, but have to give a proof ourselves, since the situation in this paper differs from the one mentioned previously, e.g. as we are not having a bounded Nemetskii operator as the non-linearity in the semi-linear state equation.
		
		Let $\gamma,c >0$. By definition, the Lagrangian function corresponding to our problem is given by
		\begin{align}\label{LagrangianGammaC}
		\begin{split}
		\mathcal{L}_{\gamma, c}(\Omega, y_{\gamma, c}, p_{\gamma, c}) = &\frac{1}{2}\int_{\Omega} (y_{\gamma, c} -\bar{y})^2dx  + a(y_{\gamma,c}, p_{\gamma,c}) \\
		&+ ( \text{max}_\gamma\big(\bar{\lambda} + c\cdot(y_{\gamma, c} - \varphi)\big), p_{\gamma, c})_{L^2(\Omega)} - (f,p_{\gamma, c})_{L^2(\Omega)}
		\end{split}
		\end{align}
		
		We have to proof the assumptions (H0) - (H3) needed to apply the averaged adjoint theorem as found in \cite[Chapter 7, Thm. 7.1]{sturm} in order to guarantee shape differentiability. For convenience, we do not state all the lengthy assumptions here, and refer the interested reader to \cite[Chapter 7, Thm. 7.1]{sturm}. Let us fix a deformation vector field $V$ and denote the domains deformed by $\Omega_t$ for deformation parameters $t\in[0,\tau]$ and some $\tau >0$ small enough, such that the corresponding deformations $\mathcal{T}_{t}$ is bijective. For the rest of the existence proof, we will drop $\gamma,c$ as the subscripts of $y_{\gamma,c}$ and $p_{\gamma,c}$ for readability purposes, still knowing we are in the fully regularized situation. Define
		\begin{align}\label{LagrangianGammaC_inV}
		G: [0,\tau] \times H^1_0(\Omega) \times H^1_0(\Omega) \rightarrow \R, (t,y,p) \mapsto \mathcal{L}(\Omega_t, y\circ \mathcal{T}_t^{-1}, p\circ \mathcal{T}_t^{-1})
		\end{align}	
		for the deformed domain $\Omega_t$ resulting from application of the deformation  $\mathcal{T}_t$ in direction $V$ parametrized by $t\in [0,\tau]$.
		
		The first assumption (H0) concerns well behavedness of \cref{LagrangianGammaC_inV}. First notice that the function in \cref{LagrangianGammaC_inV} is both differentiable in the state $y$ and adjoint $p$, which can be seen after applying the transformation theorem to \cref{LagrangianGammaC_inV}.
		Thus for the set	
		\begin{align}
		X(t) := \{ \hat{y} \in H^1_0(\Omega)\; \vert \; \underset{y \in H^1_0(\Omega)}{\inf}\underset{p \in H^1_0(\Omega)}{\sup} G(t, y, p)\; = \underset{p \in H^1_0(\Omega)}{\sup} G(t, \hat{y}, p) \; \}
		\end{align}	
		we have 
		\begin{equation}
		\label{proofExistenceShapederiv0}
		X(t) = \{y^t\} \subset{H^1_0(\Omega)} \quad \forall t\in[0,\tau],
		\end{equation}	
		with $y^t = y_t\circ\mathcal{T}_t$ being the retraction of the unique solution $y_t \subset H^1_0(\Omega_t)$ of the fully regularized state equation \cref{SmoothedState} on the deformed domain $\Omega_t$ as by the use of Minty-Browder's Theorem as portrayed in the proof of \cref{Theorem0}. 
		
		Further, the Lagrangian in direction $V$ as a function of averaged states inserted 	
		\begin{equation}
		[0,1] \rightarrow \R, \; s \mapsto G(t, sy^t + (1-s)y^0, p)
		\end{equation}
		is absolutely continuous in $s\in [0,1]$. For this, we make use of the fact that
		\begin{equation}\label{EmbeddingLp}
		L^q(\Omega) \hookrightarrow L^p(\Omega)\quad \text{for }1\leq p < q \leq \infty
		\end{equation}  
		for bounded $\Omega\subset \R^n$ with a constant depending on $p$ and $q$. So absolute continuity is satisfied, as we have existing derivatives of the integrand due to \cref{AssumptionsOnMax} (i) and integrability of the integrand due to
		\begin{align}\label{max_gammaIntegrability}
		\begin{split}
		&\Vert \text{max}_{\gamma}(\bar{\lambda} + c\cdot(z - \varphi)) \Vert_{L^2(\Omega)} \\
		\leq& \Vert \text{max}_{\gamma}(\bar{\lambda} + c\cdot(z - \varphi)) - \max(0,\bar{\lambda} + c\cdot(z - \varphi)) \Vert_{L^2(\Omega)} \\
		\quad&\;+ \Vert \max(0, \bar{\lambda} + c\cdot(z - \varphi)) \Vert_{L^2(\Omega)} \\
		\leq& \text{vol}(\Omega)^{\frac{1}{2}}g(\gamma) + \Vert \bar{\lambda} + c\cdot(z - \varphi) \Vert_{L^2(\Omega)} \\
		\leq& \text{vol}(\Omega)^{\frac{1}{2}}g(\gamma) + \Vert \bar{\lambda} + c\cdot z \Vert_{L^2(\Omega)} +  C\cdot c \cdot \Vert \varphi \Vert_{H^1_0(\Omega)} \\
		\leq&  \text{vol}(\Omega)^{\frac{1}{2}}g(\gamma) + \Vert \bar{\lambda}  \Vert_{L^2(\Omega)} +  C\cdot c \cdot( \Vert \varphi \Vert_{H^1_0(\Omega)} +  \Vert z \Vert_{H^1_0(\Omega)} )< \infty
		\end{split}
		\end{align}
		for all $z \in L^2(\Omega)$ and all $z \in H^1_0(\Omega)$ as by \cref{AssumptionsOnMax} (ii), $\Omega$ being bounded and $H^1_0(\Omega) \hookrightarrow L^4(\Omega) \hookrightarrow L^2(\Omega)$ with constant $C$ by \cref{EmbeddingH1inL4} and \cref{EmbeddingLp}. Further, the directional derivative mapping	
		\begin{equation}
		[0,1] \rightarrow \R, \; s \mapsto \frac{\partial}{\partial y} G(t, sy^t + (1-s)y^0, p; \tilde{p})
		\end{equation}	
		is integrable for all $\tilde{p} \in H^1_0(\Omega)$, since	
		\begin{align*}
		&\Vert \frac{\partial}{\partial y}G(t, sy^t + (1-s)y^0, p; \tilde{p}) \Vert_{L^1(0,1)} \\
		=& \int_{0}^{1} \vert a_t(p, \tilde{p}) + c\cdot\Big(\text{sign}_\gamma\big(\bar{\lambda} + c\cdot(sy^t + (1-s)y^0 - \varphi)\big)p, \tilde{p}\Big)_{L^2(\Omega)} \\ 
		&\qquad+ ( sy^t + (1-s)y^0 - \bar{y}, \tilde{p})_{L^2(\Omega)} \vert \;ds \\
		\leq& \int_{0}^{1} B_t \cdot \Vert p \Vert_{H^1_0(\Omega)} \Vert \tilde{p}\Vert_{H^1_0(\Omega)} + c \cdot \Vert p \Vert_{H^1_0(\Omega)} \Vert \tilde{p}\Vert_{H^1_0(\Omega)} \\
		&\qquad + (s \Vert y^t \Vert_{H^1_0(\Omega)} + (1-s) \Vert y^0 \Vert_{H^1_0(\Omega)} + \Vert \bar{y} \Vert_{L^2(\Omega)}) \Vert \tilde{p}\Vert_{H^1_0(\Omega)} \;ds \\
		=& (B_t + c)\Vert p \Vert_{H^1_0(\Omega)} \Vert \tilde{p}\Vert_{H^1_0(\Omega)} + (\frac{1}{2}\Vert y^t \Vert_{H^1_0(\Omega)} \\
		&+ \frac{1}{2}\Vert y^0 \Vert_{H^1_0(\Omega)} + \Vert \bar{y} \Vert_{L^2(\Omega)})\Vert \tilde{p}\Vert_{H^1_0(\Omega)} < \infty
		\end{align*}	
		by H\"older's Inequality, \cref{AssumptionsOnMax} (iii) and $a_t(.,.)$ being the  bilinear form defined by retraction of $a(.,.)$ from $\Omega_t$ to $\Omega$ bound with constants $B_t>0$. This, and the easy to verify fact that $G$ is a affine linear function in $p$, gives us (H0). We remind the careful reader, that the Jacobians created by retraction of $\Omega_t$ to $\Omega$ are to be implicitly included in scalarproducts and norms above for the calculations to be valid. We don't explicitly state these for readability.
		
		Next we introduce the set of so called averaged adjoints	
		\begin{equation}\label{AveragedAdjSet}
		Y(t, y^t, y^0) := \Big\{ q \in H^1_0(\Omega) \; \vert \; \int_{0}^{1} \frac{\partial}{\partial y} G(t, sy^t + (1-s)y^0, q; \tilde{p}) \;ds = 0\;\; \forall \tilde{p}\in H^1_0(\Omega) \Big\}
		\end{equation}
		We manipulate the averaged adjoint equation found in \cref{AveragedAdjSet} by interchanging integrals	
		\begin{align}\label{AveragedAdjEq}
		\begin{split}
		0 = &\int_{0}^{1} \frac{\partial}{\partial y} G(t, sy^t + (1-s)y^0, q; \tilde{p}) \;ds \\
		=& \int_{0}^{1}  a_t(q, \tilde{p}) + c\cdot\Big(\text{sign}_\gamma\big(\bar{\lambda} + c\cdot(sy^t + (1-s)y^0 - \varphi)\big)q, \tilde{p}\Big)_{L^2(\Omega)} \\
		&\qquad+ \big( sy^t + (1-s)y^0 - \bar{y}, \tilde{p}\big)_{L^2(\Omega)} \;ds \\
		=&\; a_t(q, \tilde{p}) + c\cdot \Big( (\int_{0}^{1}\text{sign}_{\gamma}(\bar{\lambda} + c\cdot( sy^t + (1-s)y^0 - \varphi  ))ds)q, \tilde{p}\Big)_{L^2(\Omega)} \\
		&\qquad + (\frac{1}{2}y^t + \frac{1}{2}y^0 - \bar{y}, \tilde{p})_{L^2(\Omega)}
		\end{split}
		\end{align}	
		which is an elliptic PDE with an additional positive $L^\infty(\Omega)$ coefficient function term for the zero'th order terms, where we again omitted explicit statement of Jacobians. By \cref{AssumptionsOnMax} $(iii)$, the additional coefficient term for the zero'th order terms in the averaged adjoint equation \cref{AveragedAdjEq} satisfies	
		\begin{align}\label{AveragedAdjointZeroOrderCoef}
		0 \leq \int_{0}^{1}\text{sign}_{\gamma}(\bar{\lambda}_t + c\cdot( sy^t + (1-s)y^0 - \varphi_t  ))ds \leq 1 \qquad \forall t\in [0,\tau]
		\end{align}
		which results in coercivity and boundedness of the corresponding bilinear form of the averaged adjoint equation. This lets us apply the Lemma of Lax-Milgram, resulting in existence of a unique solution for the averaged adjoint equation we will denote by $q^t\in H^1_0(\Omega)$ for all $t \in [0,\tau]$. Thus we have the identity $Y(t, y^t, y^0) = \{ q^t \} \subset H^1_0(\Omega)$, which together with \cref{proofExistenceShapederiv0} ensures condition (H2).
		
		We also notice, that the derivatives of $\frac{\partial}{\partial t}G$ exist and can be explicitly calculated after application of the transformation theorem, giving us (H1).
		
		To apply the averaged adjoint theorem from \cite{sturm2015shape}, it remains to address (H3), which is satisfied in our case by application of \cite[Lemma 4.1]{DissSturm}, if for the unique solutions of the state- and adjoint equation $y^0\in X(0)$ and $q^0\in Y(0, y^0, y^0)$ and a given sequence $(t_n)_{n\in\mathbb{N}}\subset [0,\tau]$ converging to zero, we can find a  subsequence $(t_{n_k})_{k\in\mathbb{N}}\subseteq(t_n)_{n\in\mathbb{N}}$ with $q^{t_{n_k}} \in Y(t_{n_k}, y^t_{n_k}, y^0)$ such that
		\begin{equation}\label{AveragedAdjointH3}
		\underset{\substack{k \rightarrow \infty \\ t\searrow0}}{\lim} \frac{\partial}{\partial t}G(t, y^0, q^{t_{n_k}}) = \frac{\partial}{\partial t}G(0, y^0, q^0)
		\end{equation}
		We will mimic parts of the argumentation found in the proof of \cite[Theorem 5.1]{DissSturm} accustomed to our situation, which is slightly different than the one found in \cite[Theorem 5.1]{DissSturm} or \cite[Theorem 7.2]{sturm}.
		
		Consider the solutions $y^0\in X(0)$ and $q^0\in Y(0, y^0, y^0)$ and a sequence \newline $(t_n)_{n\in\mathbb{N}}\subset [0,\tau]$ converging to zero.
		
		First notice that by monotony of the Nemetskii operator \cref{NymetskiyGamma} of the concerning semilinear state equation we have
		\begin{equation}
		(\text{max}_{\gamma}(\bar{\lambda} + c\cdot(z - \varphi)), z)_{L^2(\Omega)} \geq (\text{max}_{\gamma}(\bar{\lambda} - c\cdot\varphi), z)_{L^2(\Omega)} \qquad \forall z\in L^2(\Omega)
		\end{equation}
		This in turn, together with the coercivity of the retracted bilinearform $a_t(.,.)$ with constant $K_t >0$, \cref{max_gammaIntegrability} and choosing $y^t\in X(t) \subset H^1_0(\Omega)$ as a testfunction gives us	
		\begin{align}
		\begin{split}
		0 \leq \Vert y^t \Vert_{H^1_0(\Omega)}^2 \leq& K_t \cdot a_t(y^t, y^t) \\
		=& K_t \int_{\Omega}f \cdot y^t - \text{max}_\gamma(\bar{\lambda} + c\cdot(y^t - \varphi)) \cdot y^t \;dx \\
		\leq& K_t \int_{\Omega}f \cdot y^t - \text{max}_\gamma(\bar{\lambda} - c \cdot \varphi)) \cdot y^t \;dx \\
		\leq& K_t\cdot(\Vert f \Vert_{L^2(\Omega)} + \Vert\text{max}_\gamma(\bar{\lambda} - c \cdot \varphi))\Vert_{L^2(\Omega)})\cdot \Vert y^t \Vert_{H^1_0(\Omega)} < \infty
		\end{split}
		\end{align}
		again omitting Jacobians. Dividing by $\Vert y^t \Vert_{H^1_0(\Omega)}$, using the convergence $\mathcal{T}_t$ to the identity for $t\downarrow0$ and by taking a supremum we achieve 		
		\begin{align}\label{BoundednessRetractedStates}
		\begin{split}
		\Vert y^t \Vert_{H^1_0(\Omega)} \leq \underset{t\in \{t_n\}}{\sup}\Big(K_t\cdot(\Vert f \Vert_{L^2(\Omega)} + \Vert\text{max}_\gamma(\bar{\lambda} - c \cdot \varphi))\Vert_{L^2(\Omega)})\Big) =: M < \infty,
		\end{split}
		\end{align}	
		bounding the norms by a constant $0 < M < \infty$ independent of $t\in[0,\tau]$. Recognize that the norms still implicitly depend on $t$, since Jacobians are to be included.
		
		In the same line of argumentation we can confirm the boundedness of $\Vert q^t \Vert_{H^1_0(\Omega)}$. For this, we apply the first inequality of  \cref{AveragedAdjointZeroOrderCoef} to get
		\begin{align*}
		0 \leq&\; \Vert q^t \Vert_{H^1_0(\Omega)}^2 \leq K_t \cdot a_t(q^t, q^t) \\
		=& -K_t \Bigg(c\cdot \Big( \big(\int_{0}^{1}\text{sign}_{\gamma}(\bar{\lambda} + c\cdot( sy^t + (1-s)y^0 - \varphi ))ds\big)\cdot q^t, q^t\Big)_{L^2(\Omega)}
		\\
		&\qquad \qquad + (\frac{1}{2}y^t + \frac{1}{2}y^0 - \bar{y}, q^t)_{L^2(\Omega)}\Bigg) \\
		\leq& K_t (\Vert y^t \Vert_{H^1_0(\Omega)} + \Vert y^0 \Vert_{H^1_0(\Omega)} + \Vert \bar{y} \Vert_{L^2(\Omega)})\Vert q^t \Vert_{H^1_0(\Omega)} 
		\end{align*}
		By finally using \cref{BoundednessRetractedStates} we arrive at
		\begin{equation}\label{BoundednessRetractedAdjointsAveraged}
		\Vert q^t \Vert_{H^1_0(\Omega)} \leq \underset{t\in \{t_n\}}{\sup}\Big(K_t\cdot ( M + \Vert y^0 \Vert_{H^1_0(\Omega)} + \Vert \bar{y} \Vert_{L^2(\Omega)})\Big) < \infty
		\end{equation}	
		As we have established bound \cref{BoundednessRetractedStates}, we can choose a subsequence $(t_{n_k})_{k\in\mathbb{N}}\subseteq(t_n)_{n\in\mathbb{N}}$, such that $y^{t_{n_k}} \rightharpoonup z$ weakly in $H^1_0(\Omega)$ for $k \rightarrow \infty$ and some $z \in H^1_0(\Omega)$. 
		
		Further, using the convergence of the retracted functions $\bar{\lambda}_t$ and  $\varphi_t$ in $L^2(\Omega)$ for $t\downarrow0$, we can uniformly and independently of $t_{n_k}$ bound	
		\begin{align*}
		&\Vert \text{max}_\gamma( \bar{\lambda} + c\cdot(y^{t_{n_k}} - \varphi))\Vert_{L^2(\Omega)} \\
		\leq&\; \text{vol}(\Omega)^{\frac{1}{2}}g(\gamma) + c\cdot M + \underset{t \in \{t_{n_k}\}}{\sup} 
		\Vert \bar{\lambda} - c \cdot \varphi \Vert_{L^2(\Omega)} < \infty
		\end{align*}	
		by using \cref{max_gammaIntegrability} and \cref{BoundednessRetractedStates}. By having boundedness of the coercive bilinearforms $a_t(.,.)$, \cref{BoundednessRetractedStates} and smoothness in $\text{max}_{\gamma}$ by \cref{AssumptionsOnMax} (i) we are able to apply Lebesgue's dominated convergence theorem to the retracted state equations, giving us $y_{t_{n_k}} \rightharpoonup y^0$ weakly in $H^1_0(\Omega)$ due to the unique solution guaranteed by Minty-Browder's theorem.
		
		Applying the same routine due to \cref{BoundednessRetractedAdjointsAveraged}, we can choose a subsequence of $\{t_{n_k}\}$, which we will again call $\{t_{n_k}\}$ by abuse of notation, such that $q^{t_{n_k}} \rightharpoonup u$ weakly in $H^1_0(\Omega)$ for some $u \in H^1_0(\Omega)$. Then uniform boundedness \cref{AveragedAdjointZeroOrderCoef}, the previously established weak convergence $y_{t_{n_k}} \rightharpoonup y^0$ and \cref{BoundednessRetractedAdjointsAveraged} yield applicability of Lebegue's theorem for inserted $t_{n_k}$ in \cref{AveragedAdjEq}. For $k\rightarrow \infty$, the limit equation of \cref{AveragedAdjEq} is the fully regularized adjoint equation \cref{SmoothedAdjoint}, which has a unique solution by Lax-Milgram's lemma. Whence $q^{t_{n_k}} \rightharpoonup q^0 = p_{\gamma,c} \in Y(0, y^0, y^0)$ weakly in $H^1_0(\Omega)$ with the previously established weak convergence of $q^{t_{n_k}}$ and continuity of $\text{sign}_\gamma$ by \cref{AssumptionsOnMax} (i).
		
		Now we have found a subsequence $\{t_{n_k}\}\subseteq \{t_n\}$, such that $q^{t_{n_k}} \rightharpoonup q^0$ weakly in $H^1_0(\Omega)$. Using the transformation theorem, $G(t, y^0,q^{t_{n_k}})$ from \cref{LagrangianGammaC_inV} can be stated as an integral in $\Omega$ with integrands being differentiable in $t\in[0,\tau]$. The derivative $\frac{\partial}{\partial t}G(t, y^0,q^{t_{n_k}})$ is weakly continuous in it's first and last argument, hence the weak convergence $q^{t_{n_k}} \rightharpoonup q^0$ implies \cref{AveragedAdjointH3}, which is condition (H3) from \cite[Theorem 7.1]{sturm}. All assumptions (H0)-(H3) for the averaged adjoint theorem \cite[Theorem 7.1]{sturm} are satisfied, finally guaranteeing existence of shape derivatives $D\mathcal{J}_{\gamma,c}$ for all $\gamma,c >0$.
	\end{proof}

	\newpage
	\bibliographystyle{plain}
	\bibliography{citations.bib}

\end{document}